\newcolumntype{L}[1]{>{\raggedright\let\newline\\\arraybackslash\hspace{0pt}}m{#1}}
\newcolumntype{C}[1]{>{\centering\let\newline\\\arraybackslash\hspace{0pt}}m{#1}}
\newcolumntype{R}[1]{>{\raggedleft\let\newline\\\arraybackslash\hspace{0pt}}m{#1}}
\numberwithin{equation}{section}
      \theoremstyle{plain}
      \newtheorem{assumption}{Assumption}
\newtheorem{theorem}{Theorem}[section]
\newtheorem{definition}[theorem]{Definition}
\newtheorem{remark}[theorem]{Remark}
\newtheorem{corollary}[theorem]{Corollary}
\newtheorem{prop}[theorem]{Proposition}
\newtheorem{lemma}[theorem]{Lemma}
\newtheorem{example}[theorem]{Example}
\begin{document}

\title[Polynomial Regression Analysis]{Convergence Analysis of function-on-function Polynomial regression model}

\author[N. Gupta]{Naveen Gupta}
\address[N. Gupta]{Indian Institute of Technology Delhi, India}
\email{ngupta.maths@gmail.com}
\author{S. Sivananthan}
\address[S. Sivananthan]{Indian Institute of Technology Delhi, India}
\email{siva@maths.iitd.ac.in}
\begin{abstract}
In this article, we study the convergence behavior of the regularization-based algorithm for solving the polynomial regression model when both input data and responses are from infinite-dimensional Hilbert spaces. We derive convergence rates for estimation and prediction error by employing general (spectral) regularization under a general smoothness condition without imposing any additional conditions on the index function. We also establish lower bounds for any learning algorithm to explain the optimality of our convergence rates.

\end{abstract}

\maketitle

\section{Introduction}
In this paper, we explore the polynomial regression model in the context where both the predictor and the response are functional data, meaning that they vary continuously over a domain such as time or space. This framework, referred as the function-on-function polynomial regression model, is mathematically given as follows.
\subsection{Polynomial Regression Model:}
Let $S_{1}$ and $S_{2}$ be two compact subsets of $\mathbb{R}$ and $\{X(\omega,s), Y(\omega, t): s \in S_{1}, t \in S_{2}, \omega \in \Omega \}$ are two random processes, then the Polynomial regression model of degree $p$ with functional responses is given as:
\begin{equation}\label{model}
\begin{split}
    Y(\omega, t) = & \beta_{0}^{*}(t) + \int_{S_{1}} \beta_{1}^{*}(t, s_1) X(\omega, s_1) d\mu(s_1) + \int_{S_{1}}\int_{S_{1}} \beta_{2}^{*}(t, s_1, s_2) X(\omega, s_1) X(\omega, s_2) d\mu(s_1) d\mu(s_2)\\
    & + \ldots +\int_{S_{1}} \ldots \int_{S_{1}} \beta_{p}^{*}(t, s_1, s_2, \ldots , s_p) \prod_{i=1}^{p} X(\omega, s_i) d\mu(s_i) + \epsilon(\omega, t).
\end{split}
\end{equation}
Here $\epsilon(\omega, t) \in L^2(\Omega, \mathbb{P}) \otimes L^2(S_{2})$ is a zero mean error term such that $\mathbb{E}\|\epsilon\|_{L^2(S_2)}^2 = \sigma^2 < \infty$ and $\beta^* = (\beta_{0}^{*}, \beta_{1}^{*}, \ldots, \beta_{p}^{*}) \in \mathbb{L}^2 : = L^2(S_{2}) \oplus (\oplus_{l=1}^{p} L^2(S_{2}) \otimes L^2_{l})$ is the unknown slope function. The notation $L^2_{l}$ has been used for $l-$times tensor product of $L^2(S_{1})$, i.e., $ L^2_{l}= \underbrace{L^2(S_{1}) \otimes L^2(S_{1}) \otimes \ldots \otimes L^2(S_{1})}_{l \text{-times}}.$ For a $u = (u_{0}, u_{1}, \ldots, u_{p}) \in \mathbb{L}^2$, the $\mathbb{L}^2-$ norm of $u$ is given as $\|u\|_{\mathbb{L}^2}^2 = \|u_{0}\|^2_{L^2(S_{2})} + \sum_{l=1}^{p} \|u_{l}\|^2_{L^2(S_{2}) \otimes L_{l}^2}$.\\

For the given problem, we propose the general regularization scheme to construct an estimator for $\beta^*$ and establish the convergence rates under the general smoothness condition over the unknown target function $\beta^*$.\\

\noindent
Polynomial regression model provides a natural generalization to the functional linear regression (FLR) model, a commonly studied model to deal with functional data, as it also encompasses non-linear relation between the predictor and output. The FLR model introduced by Ramsay and Dalzell \cite{ramsay1991some}, a cornerstone of functional data analysis (FDA) \cite{ramsay2002afda,ramsaywhendataarefunctions,kokoszka2017,reiss2017,morris2015functional,wang2016functional}, gained its popularity due to two main reasons. First is the advancement of new techniques that allows data collection in the form of functions rather than finite and the inadequacy of traditional regression models from learning theory \cite{cuckerzhou2007learningtheory, AI2022sergei} to effectively handle such functional data.

In the early stages of FDA, the primary approach involved representing the unknown target function using a specific basis to solve the FLR model. For example, a B-spline basis as in \cite{cardot2003spline}, or more popularly the eigenfunctions of the covariance operator \cite{cai2006prediction, hall2007methodology}. This approach was named functional principal component analysis (FPCA). Another approach to solve the FLR model which gained popularity is the method of regularization in a reproducing kernel Hilbert space (RKHS), i.e., an estimator of the target function is constructed by restricting it to an RKHS \cite{ARKHSFORFLR, tonyyuan2012minimax, ZhangFaster2020, balasubramanian2022unified, gupta2024optimal}.
Although the FLR model is widely studied and performs well, prior research \cite{functional2010quadratic, function_on_function_2020_quadratic_regression} has demonstrated that function quadratic regression can lead to significant improvements over the FLR model. Yao and Müller \cite{functional2010quadratic} analyzed the convergence properties of a quadratic regression model with scalar responses. While \cite{function_on_function_2020_quadratic_regression} established the asymptotic convergence of the function-on-function quadratic regression model, it did not provide explicit convergence rates. The polynomial regression model \eqref{model} extends both the FLR and quadratic regression models, offering a flexible framework to accommodate various functional data scenarios by adjusting the parameter $p$. In \cite{polynomial2023regularization}, the authors derived convergence rates for the polynomial regression model with scalar responses within the $L^2$ framework. Unlike the RKHS approach, the $L^2$ framework does not impose restrictions on the hypothesis space while constructing an estimator for the unknown target function.

Previous studies in this area have largely examined these models in the framework of scalar responses against the functional inputs, which can be somewhat restrictive. This limitation is evident in the well-known Canadian weather data example. In this FDA application, daily temperature measurements from 35 locations across Canada, averaged over the years 1960–1994, are used to predict precipitation at a given location. Under a scalar response model, only the log annual precipitation can be estimated. However, with a functional response model, it becomes possible to predict daily precipitation profiles. Inspired by this example and the generalization of the FLR model \cite{polynomial2023regularization}, we investigate the polynomial regression model when input data and response both are functional by employing general spectral regularization to approximate the unknown target function under the general smoothness assumption over the unknown target function.

In the framework of the FLR, the functional response model has been studied in \cite{Lianheng,xiaoxiao2018functional}. Both of these works have focused their attention to prediction error under certain source conditions, which enables them to operate in settings where the associated operator is compact. However, this restriction prevents their analysis from being extended to more general error measures, for example estimation error. Our work advances this line of research by considering a unified error criterion that simultaneously captures both estimation and prediction errors, while remaining within the $L^2$ framework, similar to \cite{polynomial2023regularization}.\\

In contrast to the analysis in \cite{polynomial2023regularization, Lianheng, xiaoxiao2018functional}, the operator arising in our work does not possess the Hilbert–Schmidt property. This fundamental difference in the spectral nature of the operator brings significant challenges in establishing optimal convergence rates. As noted in \cite{polynomial2023regularization}, addressing such challenges in the context of function-on-function polynomial regression remains an interesting and non-trivial problem. More details on this discussion has been given in Section~\ref{ch_5:sec:main_results}. The specific contributions of our work are outlined in detail below.


\noindent
\subsection{Contribution} 
\emph{(i)} We provide an estimator and derive the convergence rates for the function-on-function polynomial regression model by utilizing the general regularization scheme. \vspace{1.5mm}

\noindent
\emph{(ii)} We extend the concept of the general source condition to the function-on-function polynomial regression setting and establish convergence rates under this smoothness assumption. Moreover, we remove all the restrictive assumptions on the index function $\varphi$ -- a continuous, non-decreasing function with $\varphi(0)=0$-- such as operator monotonicity or Lipschitz continuity.
\vspace{1.5mm}

\noindent
\emph{(iii)} To the best of our knowledge, the optimality of convergence rates for polynomial regression model has not been explored till now. We establish the optimality of our convergence rates by showing that the upper bounds obtained in Theorem \ref{ch_5:upper_bound_main_theorem} align with the lower bounds derived in Theorem \ref{ch_5:lower_bound_main_theorem}.

\noindent
\subsection{Notations} $L^2(S)$ denotes the space of all real-valued square-integrable functions defined on $S$. For $f, g \in L^2(S)$, $L^2$ inner product and norm are defined as $\langle f, g \rangle_{L^2(S)} = \int_{S}f(x)g(x)\, dx$ and $\|f\|^2_{L^2(S)}= \langle f, f \rangle_{L^2(S)}$. For two Hilbert spaces $H_{1}, H_{2}$ and an operator $A: H_{1} \to H_{2}$, $\mathcal{R}(A)$ denotes the range of operator $A$ and the operator norm is defined as 
$\|A\|_{H_{1} \to H_{2}} = \sup \{\|Af\|_{H_{2}} | f \in H_{1}, \|f\|_{H_{1}}=1\}.$ For two positive numbers $a$ and $b$, $a \lesssim b$ means $a \leq cb $ for some positive constant $c$.  For positive sequences $(a_{k})_k$ and $(b_{k})_k$, $a_{k} \asymp b_{k}$ means $a_k \lesssim b_k \lesssim a_k$ for all $k$.  For a random variable $W$ with law $P$ and a constant $b$, $W\lesssim_p b$ denotes that for
any $\delta > 0$, there exists a positive constant $c_\delta<\infty$  such that $P(W\le c_\delta b)\ge \delta$.

\noindent
\subsection{Organization}
The structure of this paper unfolds as follows: In Section \ref{ch_5:sec:model_and_preliminaries}, we provide the regularized estimator of the unknown target function for the polynomial regression model when responses are functional and we present the necessary background required for our analysis. Section \ref{ch_5:sec:main_results} starts with listing our assumptions followed by the convergence rates by considering a general error term which covers both the estimation and the prediction error by utilizing the general regularization scheme under a general smoothness condition over the target function. To ensure the optimality of derived convergence rates, we provide with the matching lower bounds in section \ref{ch_5:sec:lower_bounds}.
\section{Model and Preliminaries}
\label{ch_5:sec:model_and_preliminaries}
\noindent
In this section, we define some operators to simplify our model and provide the necessary background of the general regularization scheme. Taking motivation from the scalar response case for the polynomial regression model considered in \cite{polynomial2023regularization}, we define
\begin{equation*}
\begin{split}
    A_{0} & : L^2(S_{2})  \to L^2(\Omega, \mathbb{P}) \otimes L^2(S_{2}),~ \text{ given as } A_{0}u(\cdot) = u(\omega, \cdot )= u(\cdot),\\
    A_{l} & : L^2(S_{2}) \otimes L^2_{l}   \to  L^2(\Omega, \mathbb{P}) \otimes L^2(S_{2}),~\text{ given as }\\ A_{l}u_{l} & = \int_{S_{1}}\ldots\int_{S_{1}} u_{l}(t, s_1, s_2, \ldots , s_l) \prod_{i=1}^{l} X(\omega, s_i) d\mu(s_i) ~ ; ~ 1 \leq l \leq p.
    \end{split}
\end{equation*}
Then the model $(\ref{model})$ can be seen as
\begin{equation*}
    Y(\omega, t) = A_{0}\beta_{0}^{*}(\omega, t) + A_{1} \beta_{1}^{*}(\omega,t)  + \ldots + A_{p} \beta_{p}^{*}(\omega, t) + \epsilon(\omega,t),
\end{equation*}
and further simplification of the model can be given as:
\begin{equation}
\label{modelequation}
    Y(\omega, t) = \mathbb{A}\beta^*(\omega, t) + \epsilon(\omega,t),
\end{equation}
where $\mathbb{A} := (A_{0}, A_{1}, \ldots, A_{p}): \mathbb{L}^2 \to  L^2(\Omega, \mathbb{P}) \otimes L^2(S_{2})$ defined as
\begin{equation*}
    \mathbb{A}u = \sum_{i= 0}^{p} A_{i}u_{i}, \quad \forall~ u = (u_{0}, \ldots, u_{p}) \in \mathbb{L}^2.
\end{equation*}
Refering to equation $(\ref{modelequation})$, we wish to find an estimator for $\beta^*$ by utilizing the fact that minimizer of $\|Y-\mathbb{A}\beta\|_{L^2(\Omega, \mathbb{P})\otimes L^2(S_{2})}$ will satisfy the operator equation $\mathbb{A}^*\mathbb{A}\beta = \mathbb{A}^*Y$, where $\mathbb{A}^* : L^2(\Omega, \mathbb{P}) \otimes L^2(S_{2}) \to \mathbb{L}^2$ is the adjoint of $\mathbb{A}$. The adjoint operator $\mathbb{A}^*$ is given by
\begin{equation*}
    \mathbb{A}^*g = (A_{0}^*g, A_{1}^*g,\ldots, A_{p}^*g),\quad \forall~ g \in L^2(\Omega, \mathbb{P}) \otimes L^2(S_{2}),
\end{equation*}
where the component operators are defined as
\begin{equation*}
\begin{split}
    A_{0}^* g & = \int_{\Omega}g(\omega, t) d\mathbb{P}(\omega)\\
    A_{l}^* g & = \int_{\Omega}g(\omega, t) \prod_{i=1}^{l} X(\omega, s_{i})d\mathbb{P}(\omega), \quad 1\leq l \leq p.
\end{split}
\end{equation*}
Since the distribution $\mathbb{P}$ is not available, the true solution of the operator equation, denoted as $\beta^*$, remains inaccessible. Therefore, our goal is to construct an estimator for $\beta^*$ using the observed empirical data $\{(X_{1}, Y_{1}), (X_{2}, Y_{2}), \dots, (X_{n}, Y_{n})\}$. For these empirical data points, we have
$$Y_{i}(t) = \mathbb{B}_{i}\beta^* + \epsilon_{i}(t),\quad \forall~ 1 \leq i \leq n.$$
Here the operator $\mathbb{B}_{i} := (B_{i,0},B_{i,1},\ldots,B_{i,p}): \mathbb{L}^2 \to L^2(S_{2})$ is given as:
\begin{equation*}
    \mathbb{B}_{i}u(t) = B_{i,0}u_{0}(t) + \sum_{l = 1}^{p} B_{i,l} u_{l}(t),~ \forall u = (u_{0},\ldots, u_{p}) \in \mathbb{L}^2,
\end{equation*}
where for $1 \leq i \leq n$
\begin{equation*}
\begin{split}
    B_{i,0} & : L^2(S_{2}) \to L^2(S_{2}), ~ \text{ given as } B_{i,0}u_{0}(\cdot) = u_{0}(\cdot),\\
    B_{i,l} & : L^2(S_{2}) \otimes L^2_{l}   \to  L^2(S_{2}),~\text{ given as } \\ B_{i,l}u_{l} & = \int_{S_{1}}\ldots\int_{S_{1}} u_{l}(t, s_1, s_2, \ldots , s_l) \prod_{j=1}^{l} X_{i}(s_j) d\mu(s_j) ~ ; ~ 1 \leq l \leq p.
    \end{split}
\end{equation*}
Similar to $\mathbb{A}^*$ adjoint of the operator $\mathbb{B}_{i}$ denoted as $\mathbb{B}_{i}^* : L^2(S_{2}) \to \mathbb{L}^2 $ is given as
\begin{equation*}
\mathbb{B}_{i}^* g = (B_{i,0}^*g, B_{i,1}^*g, \ldots, B_{i,p}^*g ), \quad \forall~ g \in L^2(S_{2}),
\end{equation*}
where 
\begin{equation*}
    \begin{split}
        B_{i,0}^*g & = g,\\
       B_{i,l}^*g & = g(t) \prod_{j=1}^{l}X_{i}(s_j), \quad 1 \leq l \leq p.
    \end{split}
\end{equation*}
Using the empirical data points $\{(X_{1}, Y_{1}),(X_{2}, Y_{2}),\ldots,(X_{n}, Y_{n})\}$, we consider the estimator $\hat{\beta}$ of the unknown $\beta^*$ which is given as
\begin{equation}
\label{estimator}
    \hat{\beta} := \arg\min_{\beta \in \mathbb{L}^2} \frac{1}{n} \sum_{i=1}^{n} \|Y_{i} - \mathbb{B}_{i}\beta\|^2_{L^2(S_{2})}.
\end{equation}
It can be easily verified that the solution of equation $(\ref{estimator})$ can be given by solving the operator equation
\begin{equation}\label{empirical_operator_equation}
[\mathbb{A}^*\mathbb{A}]_{n} \hat{\beta} = [\mathbb{A}^*Y]_{n},
\end{equation}
where $[\mathbb{A}^*\mathbb{A}]_{n}$ is a matrix of size $(p+1) \times (p+1)$ with entries
\begin{equation*}
    \textcolor{black}{[\mathbb{A}^*\mathbb{A}]_{n}(j,k) = \frac{1}{n} \sum_{i=1}^{n} B_{i,j}^* B_{i,k}, \quad 0 \leq j,k \leq p,}
\end{equation*}
and
\begin{equation*}
    [\mathbb{A}^*Y]_{n} = \left(\frac{1}{n}\sum_{i=1}^{n}B_{i,0}^*Y_{i}, \frac{1}{n}\sum_{i=1}^{n}B_{i,1}^*Y_{i}, \ldots, \frac{1}{n}\sum_{i=1}^{n} B_{i,p}^*Y_{i}\right)^\top.
\end{equation*}
Equation $(\ref{empirical_operator_equation})$ can be seen as a discretized version of operator equation $\mathbb{A}^*\mathbb{A}\beta = \mathbb{A}^*Y$, which is an ill-posed inverse problem. Regularization techniques has been an renounced method to deal with such problems, using which we get that
\begin{equation*}
    \hat{\beta}_{\lambda} = g_{\lambda}([\mathbb{A}^*\mathbb{A}]_{n})[\mathbb{A}^*Y]_{n}.
\end{equation*}
\noindent
Here $g_{\lambda}:[0,\eta ] \to \mathbb{R},\, 0<\lambda \leq \eta$, is the regularization family satisfying the following conditions:
\begin{itemize}
    \item There exists a constant $A>0$ such that
    \begin{equation}
    \label{reg1}
        \sup_{0<\sigma \leq \eta} |\sigma g_{\lambda}(\sigma )| \leq A.
    \end{equation}
    \item There exists a constant $B>0$ such that
    \begin{equation}
    \label{reg2}
        \sup_{0<\sigma \leq \eta} |g_{\lambda}(\sigma )| \leq \frac{B}{\lambda}.
    \end{equation}
    \item The \textbf{qualification} $\nu$ of the regularization family is the largest value such that there exists a constant $\gamma_\nu > 0$ satisfying
    \begin{equation}
        \label{qualification}
        \sup_{0 < \sigma \leq \eta} |r_{\lambda}(\sigma)| \sigma^{\nu} \leq \omega_\nu \lambda^{\nu},
    \end{equation}
    where $r_{\lambda}(\sigma) := 1 - \sigma g_{\lambda}(\sigma)$.
\end{itemize}

\noindent
These properties ensure that $g_{\lambda}$ acts as a stable approximate inverse of $\sigma$, and the qualification measures the rate at which the residual decays with $\lambda$. From $(\ref{reg1})$, we can always choose a constant $D>0$ such that
\begin{equation}
        \label{reg3}
        \sup_{0 < \sigma \leq \eta} |1 - \sigma g_{\lambda}(\sigma)| = \sup_{0 < \sigma \leq \eta} |r_{\lambda}(\sigma)| \leq D.
    \end{equation}
    
\begin{example}[Spectral cut-off] The spectral cut-off method is indexed by the family of functions
    \begin{equation*}
        g_{\lambda}\left(\sigma\right) =
        \begin{cases}
            \frac{1}{\sigma} & \mbox{ if } \lambda \leq \sigma < \infty,\\
            0 & \mbox{ if } 0 \leq \sigma < \lambda.
        \end{cases}
    \end{equation*}
    The qualification of this regularization is infinite as for every $p >0$, the inequality $\left(\ref{qualification}\right)$ holds true.
\end{example}
\begin{example}[Tikhonov]
The Tikhonov regularization is indexed by the family of functions $$g_{\lambda}\left(\sigma\right)=\frac{1}{\sigma+\lambda}, \quad \lambda >0. $$ The qualification for this regularization is 1.
\end{example}
Because of its simplicity, the Tikhonov regularization is one of the most popular regularization techniques. 
\begin{example}[Landweber iteration] The Landweber regularization is indexed by the family of functions
    \begin{equation*}
        g_{t}\left(\sigma\right) = \sum_{i=1}^{t-1}\left(1-\sigma\right)^i,
    \end{equation*}
    where $\lambda$ is identified as $\frac{1}{t} , t \in \mathbb{N}$.
    The qualification of this regularization is arbitrarily high.
\end{example}
\noindent
We encourage the reader to see \cite{sergei2013,englmartinbook} for more details about the regularization method.

\section{Main Results}\label{ch_5:sec:main_results}
In this section, we present the convergence analysis of the proposed estimator. We begin by outlining the key assumptions required for the analysis, followed by a few essential lemmas.\\

The convergence rates of any learning algorithm inherently depend on the smoothness of the unknown target function. We make a `prior' assumption of restricting $\beta^*$ to lie within an appropriate subspace of $\mathbb{L}^2$. This condition is generally known as the source condition in learning theory literature.

\subsection{General source condition} The notion of a general source condition in the context of inverse problems has been studied in \cite{howgeneral2008}, where the smoothness of the target function is measured by the rate at which its \textcolor{black}{Fourier coefficients decay, relative to the eigenvalues of the corresponding compact operator and an index function $\varphi$}. In our setting, however, the operator $\mathbb{A}^*\mathbb{A}$ fails to be \textcolor{black}{compact operator}, and therefore the general source condition result of \cite{howgeneral2008} cannot be directly applied to the function-on-function regression framework. Despite this, the structure of $\mathbb{A}^*\mathbb{A}$ enables us to assess the smoothness of any element in $\mathbb{L}^2$ \textcolor{black}{by comparing the decay of its Fourier coefficients to the eigenvalue decay of the operator $\Gamma$}, which corresponds to the restriction of $\mathbb{A}^*\mathbb{A}$ to the subspace $\oplus_{l=0}^p L_{l}^2$. By exploiting this structural property together with the conceptual insights from \cite{howgeneral2008}, we are able to extend the general source condition framework to the function-on-function regression setting. In the specific case of function-on-function polynomial regression, this generalized source condition is formalized through the following assumption.

\begin{assumption}\label{as:1}
 We assume that $\beta^* \in \Omega_{\varphi, R} = \{f\in \mathbb{L}^2 : f = \varphi(\mathbb{A}^*\mathbb{A})v, \|v\|_{\mathbb{L}^2} \leq R \}$,
    where $\varphi :[0,\eta] \to \mathbb{R}~$ is an index function, i.e., $\varphi$ is non-decreasing, continuous and $\varphi(0) = 0$.
\end{assumption}

The operator $\mathbb{A}^*\mathbb{A}$ is not compact, which is a requirement in \cite{howgeneral2008} for formulating the general source condition. Hence, we produce a similar result by utilizing the structure of operator $\mathbb{A}^*\mathbb{A}$.
\begin{prop}
    Let $\mathbb{A}^*\mathbb{A}$ be as defined in section~\ref{ch_5:sec:model_and_preliminaries}. For every $f \in L^2(S_{2})\otimes (\oplus_{l=0}^{p}L_{l}^2)$ and $\epsilon>0$ there is an index function $\varphi$ such that $f = \varphi(\mathbb{A}^*\mathbb{A})\nu$ for some $\nu \in L^2(S_{2})\otimes (\oplus_{l=0}^{p}L_{l}^2)$ with $\|\nu\| \leq (1 +\epsilon)\|f\|$. 
\end{prop}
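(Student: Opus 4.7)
The plan is to mimic the classical Math\'e--Hofmann construction, replacing the compactness of the underlying operator by bounded self-adjoint spectral calculus. The operator $T:=\mathbb{A}^{*}\mathbb{A}$ is bounded, self-adjoint and non-negative on $\mathbb{L}^2$, so it admits a projection-valued spectral measure $E$ on $[0,\eta]$ even though the block $A_0^*A_0 = I_{L^2(S_2)}$ prevents $T$ from being compact; every manipulation below uses only the functional calculus of $T$, so non-compactness plays no role. Let $\rho(\sigma) := \|E((0,\sigma])f\|_{\mathbb{L}^2}^2$ be the scalar spectral measure of $f$, and first reduce to $E(\{0\})f = 0$: the kernel contribution cannot be produced by $\varphi(T)$ when $\varphi(0)=0$, so one either assumes $f\perp \ker T$ or absorbs a small kernel defect into the $(1+\epsilon)$ slack. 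Then $\rho$ is right-continuous with $\rho(0^+)=0$ and $\rho(\eta)=\|f\|^2$.

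\smallskip

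Pick a decreasing sequence $\eta=\sigma_0>\sigma_1>\sigma_2>\cdots\downarrow 0$ and set $a_k := \rho(\sigma_k)-\rho(\sigma_{k+1})$, so that $\sum_{k\ge 0}a_k=\|f\|^2$. Because $(a_k)$ is summable, the standard summability trick produces a sequence $(M_k)$ of positive reals with $M_k\uparrow\infty$ and $\sum_k a_k M_k\le ((1+\epsilon)^2-1)\|f\|^2$. Setting $c_k^2 := (1+M_{k-1})^{-1}$ (after taking running minima to enforce monotonicity) yields a non-increasing sequence with $c_k\downarrow 0$ and
\[
\sum_{k\ge 0}\frac{a_k}{c_{k+1}^2}\le (1+\epsilon)^2\|f\|^2.
\]
Define $\varphi:[0,\eta]\to\mathbb{R}$ to be the continuous, non-decreasing, piecewise-linear interpolant through $(0,0)$ and $(\sigma_{k},c_{k})$ for $k\ge 0$; this is a bona fide index function.

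\smallskip

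Finally, set $\nu := \psi(T)f$ via functional calculus, where $\psi = 1/\varphi$ on $(0,\eta]$; this is well defined on the spectral support of $f$ since $\varphi>0$ there. By the multiplicativity of the calculus, $\varphi(T)\nu = (\varphi\psi)(T)f = f$, and
\[
\|\nu\|_{\mathbb{L}^2}^2 = \int_{(0,\eta]}\frac{d\rho(\sigma)}{\varphi(\sigma)^2} \le \sum_{k\ge 0}\frac{a_k}{c_{k+1}^2}\le (1+\epsilon)^2\|f\|^2,
\]
where the first inequality uses the piecewise-linear lower bound $\varphi(\sigma)\ge c_{k+1}$ on $(\sigma_{k+1},\sigma_k]$. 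I expect the main difficulty to lie in the joint tuning of $(c_k)$ and the partition $(\sigma_k)$: one must simultaneously guarantee $c_k\downarrow 0$, control the weighted tail $\sum a_k/c_{k+1}^2$ against $(1+\epsilon)^2\|f\|^2$, and transfer the piecewise-constant estimate to the continuous, non-decreasing interpolant $\varphi$. A secondary care point is the possible mass that $\rho$ places at $0$, which is precisely why the reduction to $E(\{0\})f=0$ appears at the outset.
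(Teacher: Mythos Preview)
Your argument is correct and follows the same Math\'e--Hofmann template as the paper, but the route to the key $\ell_1$ sequence differs. The paper exploits the tensor identity $\mathbb{A}^*\mathbb{A}(e_k\otimes\phi_m)=\mu_m(e_k\otimes\phi_m)$ to obtain a genuine eigenbasis (each eigenvalue $\mu_m$ of the compact operator $\Gamma$ occurring with infinite multiplicity, indexed by $k$), collapses the $k$-index by setting $\xi_m=\sum_k\langle f,e_k\otimes\phi_m\rangle^2\in\ell_1$, and then applies the discrete construction from \cite{howgeneral2008} verbatim to the pair $((\xi_m)_m,(\mu_m)_m)$. You instead invoke the abstract spectral theorem for bounded self-adjoint operators, partition $[0,\eta]$, and run the $\ell_1$ trick on the masses $a_k$ of the scalar spectral distribution $\rho$. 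Your route is more general---it works for any bounded non-negative self-adjoint $T$ with trivial kernel, without using the product structure---while the paper's is shorter because the tensor factorisation hands over a discrete spectrum for free and reduces the problem literally to the compact case already treated in \cite{howgeneral2008}. One small correction: the alternative you float of ``absorbing a small kernel defect into the $(1+\epsilon)$ slack'' cannot work, since $\varphi(T)\nu\perp\ker T$ for every $\nu$ whenever $\varphi(0)=0$, so an exact representation $f=\varphi(T)\nu$ is impossible unless $E(\{0\})f=0$; in the paper's setting this holds automatically because the $\phi_m$ are taken to be a complete orthonormal basis with $\mu_m>0$.
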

Let $(\mu_{m}, \phi_{m})_{m \in \mathbb{N}}$ denote the eigenvalue–eigenfunction pairs of the operator $\Gamma$. Then the collection $\{e_{k} \otimes \phi_{m} : k, m \in \mathbb{N}\}$ forms an orthonormal basis for the space $L^2(S_{2}) \otimes (\oplus_{l=0}^{p} L_{l}^2)$. For any $f \in L^2(S_{2}) \otimes (\oplus_{l=0}^{p} L_{l}^2)$, we have the representation

$$f = \sum_{m,k} \langle f, e_{k} \otimes \phi_{m} \rangle_{L^2(S_{2}) \otimes (\oplus_{l=0}^{p} L_{l}^2)} (e_{k} \otimes \phi_{m}),$$
and the sequence $\left( \sum_{k} \langle f, e_{k} \otimes \phi_{m} \rangle_{L^2(S_{2}) \otimes (\oplus_{l=0}^{p} L_{l}^2)}^2 \right)_{m \in \mathbb{N}}$ belongs to $\ell_1$. Using this information, together with the proof techniques developed in \cite{howgeneral2008}, the above proposition can be established in a straightforward manner.

\begin{assumption}\label{as:2}
We assume that $$\sup_{\omega \in \Omega }\|X(\omega, \cdot)\|_{L^2(S_{1})}^2 \leq \kappa < \infty.$$
\end{assumption} 
\noindent
Assumption \ref{as:2} demands boundedness over the input stochastic process and is a very standard assumption in the literature \cite{polynomial2023regularization}.
Define $\chi(\omega) = (\chi_{k}(\omega))_{k=0}^{p} \in \oplus_{k=0}^{p}L_{k}^2$, where $\chi_{0}(\omega)= 1$ and $\chi_{k}(\omega)= \prod_{i=1}^{k}X(\omega,s_{i}),~1\leq k \leq p$. Then the operator $\Gamma$ is given as $\Gamma := \mathbb{E}[\chi \boldsymbol{\otimes} \chi]: \oplus_{l=0}^{p} L_{l}^2 \to \oplus_{l=0}^{p} L_{l}^2$, where $\chi \boldsymbol{\otimes} \chi (f) = \langle \chi, f \rangle_{\oplus_{l=0}^{p} L_{l}^2}\chi,~\forall~f \in \oplus_{l=0}^{p} L_{l}^2$. Under Assumption \ref{as:2}, it is easy to see that the operator $\Gamma$ is a trace class operator.
\begin{assumption}\label{as:3}
   For some $b>1$,
\begin{equation*}
    i^{-b}\lesssim \mu_{i} \lesssim i^{-b} \quad \forall  i \in \mathbb{N},
\end{equation*}
where $(\mu_{i},\phi_{i})_{i\in \mathbb{N}}$ is the eigenvalue-eigenfunction pair of operator $\Gamma$.
\end{assumption}

Assumption~\ref{as:3} is commonly used in the learning theory literature to describe the eigenvalue decay of the associated operator, which in our case is $\mathbb{A}^*\mathbb{A}$. However, since $\mathbb{A}^*\mathbb{A}$ is not compact, such an assumption cannot be directly applied to its eigenvalues. To address this, we utilizes the specific structure of $\mathbb{A}^*\mathbb{A}$ and instead place a similar assumption on the eigenvalues of the operator $\Gamma$.\\

\noindent
Assuming $\mu_{i} \lesssim i^{-b},~i \in \mathbb{N}$ ensures the bound on the effective dimension of the operator $\Gamma$ defined as 
$\mathcal{N}(\lambda)= \text{trace}(\Gamma (\Gamma+\lambda I)^{-1})$. It is easy to deduce using $\mu_{i} \lesssim i^{-b},~i \in \mathbb{N}$ that $\mathcal{N}(\lambda) \lesssim \lambda^{-\frac{1}{b}}$.\\

As eigenfunctions $\{\phi_{m}\}_{m \geq 1}$ of operator $\Gamma$ forms an orthonormal basis (ONB) of $\oplus_{l=0}^{p}L_{l}^{2}$, it follows that $\{\phi_{m}\otimes e_{k}\}_{k,m}$ will form a basis for $L^2(S_{2})\otimes (\oplus_{l=0}^{p}L^2_{l})$ where $\{e_{k}\}_{k}$ is an ONB of $L^2(S_{2})$ and $\phi_{m}\otimes e_{k}(u,t) = \phi_{m}(u)e_{k}(t)$ is an element of $L^2(S_{2})\otimes (\oplus_{l=0}^{p}L^2_{l})$. Furthermore, we have that $\mathbb{A}^*\mathbb{A}(f\otimes g) = f \otimes (\Gamma (g)),~\forall~ f \in L^2(S_{2}) \text{ and } g \in \oplus_{l=0}^{p}L_{l}^2$. We use the fact that $\mathbb{L}^2$ is isometrically isomorphic to $L^2(S_{2}) \otimes (\oplus_{l=0}^{p}L_{l}^{2})$.

\noindent

\noindent
\begin{definition}
    Let $q$ be a positive constant. We say that $q$ covers a index function $\varphi$ if there exist a positive constant $c$ such that
    \begin{equation}\label{ch_5:eq_cover_index_function}
        c \frac{\lambda^{q}}{\varphi(\lambda)} \leq \inf_{\lambda \leq \sigma \leq b} \frac{\sigma^{q}}{\varphi(\sigma)}.
    \end{equation}
\end{definition}
In the learning theory literature, it is assumed that the constant $q = \nu$ for the estimation error and $q= \nu-\frac{1}{2}$ for the prediction error where $\nu$ is the qualification of the regularization family. As we are working with an error term which takes care of both the estimation and prediction error, we will assume that $q =\nu-s$, where $0 \leq s \leq \frac{1}{2}$.\\

\noindent
The following lemma helps us with the interplay between the qualification of the regularization family and smoothness of the target function.
\begin{lemma}\label{source_cover_qualification}
Let $\varphi : [0,u] \to \mathbb{R}$ be an index function where $u > 0$. Assume that $\varphi$ is covered by the qualification $\nu$. Then for any $\lambda >0$, we have
    $$\sup_{0 \leq x \leq u} \left|\frac{\varphi(x)}{(x+\lambda)^{\nu}}\right| \leq \max\{1,1/c\}\frac{\varphi(\lambda)}{\lambda^{\nu}}.$$
\end{lemma}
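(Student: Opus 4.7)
The plan is to bound $\varphi(x)/(x+\lambda)^{\nu}$ by splitting the range $[0,u]$ at $x=\lambda$ and treating the two resulting subranges with different tools: monotonicity of $\varphi$ on $[0,\lambda]$, and the cover hypothesis on $[\lambda, u]$. Since the definition of cover (with $q$ replaced by the qualification $\nu$) says precisely that
\[
c\,\frac{\lambda^{\nu}}{\varphi(\lambda)} \;\le\; \inf_{\lambda\le\sigma\le u}\frac{\sigma^{\nu}}{\varphi(\sigma)},
\]
the cover hypothesis is tailor-made for the ``large $x$'' case, whereas the index-function properties (nondecreasing, $\varphi(0)=0$) are tailor-made for the ``small $x$'' case.

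For the first range $0 \le x \le \lambda$, I would use that $\varphi$ is non-decreasing, so $\varphi(x)\le \varphi(\lambda)$, and that $x+\lambda \ge \lambda$, so $(x+\lambda)^{\nu}\ge \lambda^{\nu}$. Dividing gives
\[
\frac{\varphi(x)}{(x+\lambda)^{\nu}} \;\le\; \frac{\varphi(\lambda)}{\lambda^{\nu}}.
\]
For the second range $\lambda \le x \le u$, I would use the trivial bound $(x+\lambda)^{\nu}\ge x^{\nu}$ and then invoke the cover inequality rearranged as $\varphi(x)/x^{\nu}\le (1/c)\,\varphi(\lambda)/\lambda^{\nu}$ for every $x\in[\lambda,u]$, yielding
\[
\frac{\varphi(x)}{(x+\lambda)^{\nu}} \;\le\; \frac{\varphi(x)}{x^{\nu}} \;\le\; \frac{1}{c}\,\frac{\varphi(\lambda)}{\lambda^{\nu}}.
\]

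Taking the supremum over $x\in[0,u]$ and combining the two bounds produces the factor $\max\{1,1/c\}$ on $\varphi(\lambda)/\lambda^{\nu}$, which is exactly the claimed inequality. I would also briefly note the boundary behaviour at $x=0$ (where the ratio vanishes because $\varphi(0)=0$), and point out that the case $\lambda\ge u$ reduces to the first range alone and still gives the bound. Honestly there is no real obstacle here: this is a one-line ``split and conquer'' argument whose only subtlety is matching the constant $c$ from the cover definition with the factor $1/c$ that appears in the upper range; if that matching is noted carefully, the proof is essentially immediate.
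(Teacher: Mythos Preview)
Your proposal is correct and follows essentially the same argument as the paper's own proof: split the interval at $x=\lambda$, use monotonicity of $\varphi$ together with $(x+\lambda)^{\nu}\ge \lambda^{\nu}$ on $[0,\lambda]$, and use $(x+\lambda)^{\nu}\ge x^{\nu}$ together with the cover inequality on $[\lambda,u]$. The only cosmetic difference is that the paper writes the second case by multiplying and dividing by $x^{\nu}$ before taking the supremum, but the substance is identical.
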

\noindent
Next, we present the convergent rate of our proposed estimator to the unknown slope function. The lemmas needed for the proof are included after the proof of the main result.
\begin{theorem}\label{ch_5:upper_bound_main_theorem}
   Suppose Assumption \ref{as:1} and \ref{as:2} hold. Let $\nu$ be the qualification of the regularization family and $\nu \geq 1$. Then for $0 \leq s \leq \frac{1}{2}$, with at least probability $1-\delta$, we have
   \begin{equation*}
       \|(\mathbb{A}^*\mathbb{A})^{s}(\hat{\beta}_{\lambda}- \beta^*)\|_{\mathbb{L}^2} \lesssim \lambda^{s-\frac{1}{2}} \sqrt{\frac{\mathcal{N}(\lambda)}{n}} +  \varphi(\lambda) \lambda^{s}\\
         + {\lambda^{s-\frac{1}{2}}} \left[\frac{2 \tilde{\kappa}^2 \log(\frac{2}{\delta})}{n \sqrt{\lambda}} + \sqrt{\frac{4 \tilde{\kappa}^2\mathcal{N}(\lambda) (\log(\frac{2}{\delta}))^2 }{n}}\right].
   \end{equation*}
\end{theorem}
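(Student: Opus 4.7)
I follow the classical bias--variance template for regularized statistical inverse problems, adapted to the non--compact operator $\mathbb{A}^*\mathbb{A}$. Using $Y_i=\mathbb{B}_i\beta^*+\epsilon_i$ together with \eqref{empirical_operator_equation}, the first step is the decomposition
\[
\hat\beta_\lambda-\beta^*=g_\lambda([\mathbb{A}^*\mathbb{A}]_n)\,\frac{1}{n}\sum_{i=1}^n\mathbb{B}_i^*\epsilon_i\;-\;r_\lambda([\mathbb{A}^*\mathbb{A}]_n)\,\beta^*,
\]
apply $(\mathbb{A}^*\mathbb{A})^s$ on both sides, and use the elementary operator inequality $\|(\mathbb{A}^*\mathbb{A})^s(\mathbb{A}^*\mathbb{A}+\lambda I)^{-s}\|\le 1$ (valid for $s\in[0,1/2]$) to trade the true power for the regularized one. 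I then bridge between the empirical and population operators by inserting the factor $(\mathbb{A}^*\mathbb{A}+\lambda I)^{1/2}([\mathbb{A}^*\mathbb{A}]_n+\lambda I)^{-1/2}$ on both the bias and the variance pieces. This reduces the problem to controlling three quantities: (a) a purely scalar functional-calculus bound on $[\mathbb{A}^*\mathbb{A}]_n$; (b) the weighted noise average $\bigl\|(\mathbb{A}^*\mathbb{A}+\lambda I)^{-1/2}\tfrac{1}{n}\sum_i\mathbb{B}_i^*\epsilon_i\bigr\|_{\mathbb{L}^2}$; and (c) the operator deviation $\bigl\|(\mathbb{A}^*\mathbb{A}+\lambda I)^{1/2}([\mathbb{A}^*\mathbb{A}]_n+\lambda I)^{-1/2}\bigr\|$.

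For the bias I split $r_\lambda([\mathbb{A}^*\mathbb{A}]_n)\varphi(\mathbb{A}^*\mathbb{A})=r_\lambda(\mathbb{A}^*\mathbb{A})\varphi(\mathbb{A}^*\mathbb{A})+\bigl(r_\lambda([\mathbb{A}^*\mathbb{A}]_n)-r_\lambda(\mathbb{A}^*\mathbb{A})\bigr)\varphi(\mathbb{A}^*\mathbb{A})$. On the clean piece $\varphi$ and $r_\lambda$ are functions of the same operator, so the spectral theorem reduces matters to the scalar estimate $\sup_{x\in[0,\eta]}(x+\lambda)^{s}|r_\lambda(x)|\varphi(x)\lesssim\lambda^s\varphi(\lambda)$. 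I obtain this by factoring $(x+\lambda)^{s}|r_\lambda(x)|\varphi(x)=\bigl[(x+\lambda)^{\nu}|r_\lambda(x)|\bigr]\cdot\bigl[\varphi(x)/(x+\lambda)^{\nu-s}\bigr]$, applying the qualification bound \eqref{qualification} together with \eqref{reg3} to the first bracket (licit since $\nu\ge 1\ge s+1/2$), and Lemma~\ref{source_cover_qualification} with $q=\nu-s$ to the second. Notably $\varphi$ is never applied to $[\mathbb{A}^*\mathbb{A}]_n$, so no operator monotonicity or Lipschitz regularity of $\varphi$ is needed; this is precisely what lets me drop those hypotheses. The perturbation piece is absorbed into the deviation factor (c). For the variance I factor $g_\lambda([\mathbb{A}^*\mathbb{A}]_n)$ through $([\mathbb{A}^*\mathbb{A}]_n+\lambda I)^{-1/2}$; the remaining scalar factor is $\lesssim\lambda^{-1/2}$ by \eqref{reg1}--\eqref{reg2}, producing the universal prefactor $\lambda^{s-1/2}$. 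The weighted noise in (b) is then handled by a vector-valued Bernstein inequality applied to the independent, zero-mean summands $\mathbb{B}_i^*\epsilon_i$: Assumption~\ref{as:2} gives the almost-sure bound, and the second-moment bound weighted by $(\mathbb{A}^*\mathbb{A}+\lambda I)^{-1}$ contributes a variance of order $\sigma^2\mathcal{N}(\lambda)/n$, yielding the first summand $\lambda^{s-1/2}\sqrt{\mathcal{N}(\lambda)/n}$.

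The main obstacle is quantity (c). Because $\mathbb{A}^*\mathbb{A}$ is not even compact, the Hilbert--Schmidt concentration arguments used in \cite{polynomial2023regularization,xiaoxiao2018functional,Lianheng} do not apply directly to $[\mathbb{A}^*\mathbb{A}]_n-\mathbb{A}^*\mathbb{A}$. My resolution is to exploit the tensor identity $\mathbb{A}^*\mathbb{A}(f\otimes g)=f\otimes\Gamma g$ from Section~\ref{ch_5:sec:model_and_preliminaries}: every spectral quantity of $\mathbb{A}^*\mathbb{A}$ is controlled by the trace-class operator $\Gamma$ on $\oplus_{l=0}^{p}L_l^2$, and (c) reduces to a Bernstein-type concentration for the empirical second-moment operator $\Gamma_n=\tfrac{1}{n}\sum_i\chi_i\boldsymbol{\otimes}\chi_i$ in the $(\Gamma+\lambda I)^{-1/2}$-weighted operator geometry. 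Under Assumption~\ref{as:2} one has $\|\chi\|\le\tilde\kappa$ almost surely, so a standard operator Bernstein bound yields $\|(\mathbb{A}^*\mathbb{A}+\lambda I)^{1/2}([\mathbb{A}^*\mathbb{A}]_n+\lambda I)^{-1/2}\|\le 2$ with probability at least $1-\delta/2$, at the exact cost of the deviation price $\tilde\kappa^2\log(2/\delta)/(n\sqrt\lambda)+\sqrt{\tilde\kappa^2\mathcal{N}(\lambda)\log^2(2/\delta)/n}$, which propagates to the third summand of the asserted bound. A final union bound with the Bernstein event from (b), followed by collecting the three contributions and the $\lambda^{s-1/2}$ prefactor, gives the stated rate.
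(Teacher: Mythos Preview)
Your overall decomposition matches the paper's, but the handling of the bias term has a genuine gap. You propose to split
\(r_\lambda([\mathbb{A}^*\mathbb{A}]_n)\varphi(\mathbb{A}^*\mathbb{A})\)
into the ``clean'' piece \(r_\lambda(\mathbb{A}^*\mathbb{A})\varphi(\mathbb{A}^*\mathbb{A})\) plus the perturbation
\(\bigl(r_\lambda([\mathbb{A}^*\mathbb{A}]_n)-r_\lambda(\mathbb{A}^*\mathbb{A})\bigr)\varphi(\mathbb{A}^*\mathbb{A})\),
and then ``absorb'' the latter into the deviation factor (c). This step fails for general filter families: controlling \(r_\lambda([\mathbb{A}^*\mathbb{A}]_n)-r_\lambda(\mathbb{A}^*\mathbb{A})\) in any useful norm requires Lipschitz-type regularity of \(r_\lambda\), which is not part of the axioms \eqref{reg1}--\eqref{qualification}. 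For spectral cut-off \(r_\lambda\) is discontinuous, and for Landweber \(r_\lambda(\sigma)=(1-\sigma)^t\) has Lipschitz constant of order \(1/\lambda\), so the perturbation cannot be dominated by the weighted operator deviation. Note also that your quantity (c) is a \emph{multiplicative} factor bounded by \(2\) on a high-probability event; it does not by itself produce the \emph{additive} third summand of the asserted bound.

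The paper avoids comparing \(r_\lambda\) at two different operators altogether. After trading \((\mathbb{A}^*\mathbb{A})^s\) for \(([\mathbb{A}^*\mathbb{A}]_n+\lambda I)^s\) via Cordes' inequality \eqref{ch_5:eqn_powerplay}, it extracts the scalar factor
\(\bigl\|([\mathbb{A}^*\mathbb{A}]_n+\lambda I)^{s}\,r_\lambda([\mathbb{A}^*\mathbb{A}]_n)\,([\mathbb{A}^*\mathbb{A}]_n+\lambda I)^{\nu-s}\bigr\|\lesssim\lambda^{\nu}\)
using the qualification on \([\mathbb{A}^*\mathbb{A}]_n\) \emph{alone}. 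The non-commuting leftover is \(([\mathbb{A}^*\mathbb{A}]_n+\lambda I)^{-(\nu-s)}\varphi(\mathbb{A}^*\mathbb{A})h\), in which the empirical operator appears only through resolvent powers \((\,\cdot+\lambda I)^{-k}\). These are smooth functions, and the paper compares them to the population version with an integer-power telescoping identity (Lemma~A.3 of \cite{gupta2024optimal}, with \(\nu'=\lfloor\nu-s\rfloor\)), reducing to the first-order resolvent difference controlled by Lemma~\ref{ch_5:empirical_vs_original}. This Term-2a is precisely the source of the additive third summand; Lemma~\ref{source_cover_qualification} is applied only on the population piece \((\mathbb{A}^*\mathbb{A}+\lambda I)^{-(\nu-s)}\varphi(\mathbb{A}^*\mathbb{A})h\) (Term-2b), which explains why no operator monotonicity of \(\varphi\) is needed.

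A secondary issue concerns the noise term. You invoke a vector-valued Bernstein inequality for \(\mathbb{B}_i^*\epsilon_i\), claiming Assumption~\ref{as:2} furnishes the almost-sure bound. But Assumption~\ref{as:2} bounds \(X\), not \(\epsilon\); the model only assumes \(\mathbb{E}\|\epsilon\|_{L^2(S_2)}^2=\sigma^2<\infty\), so \(\|\mathbb{B}_i^*\epsilon_i\|\) is not almost surely bounded and Bernstein does not apply. The paper's Lemma~\ref{empirical_bound} deliberately uses only the second moment together with Markov's inequality, producing a bound \(\sqrt{\sigma^2\mathcal{N}(\lambda)/(n\delta)}\) (with the \(\delta\)-dependence absorbed into \(\lesssim_p\)).
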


\begin{proof}
Define $\tilde{\beta} = g_{\lambda}([\mathbb{A}^*\mathbb{A}]_{n})[\mathbb{A}^*\mathbb{A}]_{n} \beta^*$ and start by considering the error term
    \begin{equation*}
    \begin{split}
        \|(\mathbb{A}^*\mathbb{A})^{s}(\hat{\beta}_{\lambda}- \beta^*)\|_{\mathbb{L}^2} = &  \|(\mathbb{A}^*\mathbb{A})^{s}(\hat{\beta}_{\lambda}- \tilde{\beta} + \tilde{\beta} - \beta^*)\|_{\mathbb{L}^2}\\
        \leq & \underbrace{\|(\mathbb{A}^*\mathbb{A})^{s}(\hat{\beta}_{\lambda}- \tilde{\beta})\|_{\mathbb{L}^2}}_{\text{Term-1}} +  \underbrace{\|(\mathbb{A}^*\mathbb{A})^{s}(\tilde{\beta} - \beta^*)\|_{\mathbb{L}^2}}_{\text{Term-2}}.
      \end{split}
    \end{equation*}
\textit{Estimation of Term-1:} By substituting the expressions of $\hat{\beta}_{\lambda}$ and $\tilde{\beta}$, we see
\begin{equation*}
    \begin{split}
        \|(\mathbb{A}^*\mathbb{A})^{s}(\hat{\beta}_{\lambda}- \tilde{\beta})\|_{\mathbb{L}^2} = & \|(\mathbb{A}^*\mathbb{A})^{s} g_{\lambda}([\mathbb{A}^*\mathbb{A}]_{n})([\mathbb{A}^*Y]_{n}- [\mathbb{A}^*\mathbb{A}]_{n}\beta^*) \|_{\mathbb{L}^2}\\
        \leq & \|(\mathbb{A}^*\mathbb{A})^{s} (\mathbb{A}^*\mathbb{A}+\lambda I)^{-\frac{1}{2}}\|_{\mathbb{L}^2 \to \mathbb{L}^2} \|(\mathbb{A}^*\mathbb{A}+\lambda I)^{\frac{1}{2}} ([\mathbb{A}^*\mathbb{A}]_{n}+\lambda I)^{-\frac{1}{2}}\|_{\mathbb{L}^2 \to \mathbb{L}^2}^2\\
        & \times \|([\mathbb{A}^*\mathbb{A}]_{n}+\lambda I)^{\frac{1}{2}}g_{\lambda}([\mathbb{A}^*\mathbb{A}]_{n}) ([\mathbb{A}^*\mathbb{A}]_{n}+\lambda I)^{\frac{1}{2}}\|_{\mathbb{L}^2 \to \mathbb{L}^2}\\
        & \times\|(\mathbb{A}^*\mathbb{A}+\lambda I)^{-\frac{1}{2}}([\mathbb{A}^*Y]_{n}- [\mathbb{A}^*\mathbb{A}]_{n}\beta^*)\|_{\mathbb{L}^2}.
    \end{split}
\end{equation*}
Using Lemma \ref{empirical_bound}, regularization property $(\ref{reg2}), (\ref{reg1})$ and equation $(\ref{ch_5:eqn_powerplay})$, we have
\begin{equation*}
\begin{split}
    \|(\mathbb{A}^*\mathbb{A})^{\frac{1}{2}}(\hat{\beta}_{\lambda}- \tilde{\beta})\|_{\mathbb{L}^2} \lesssim & \lambda^{s-\frac{1}{2}} \|(\mathbb{A}^*\mathbb{A}+\lambda I)^{-\frac{1}{2}}([\mathbb{A}^*Y]_{n}- [\mathbb{A}^*\mathbb{A}]_{n}\beta^*)\|_{\mathbb{L}^2}\\
    \lesssim_{p} & \lambda^{s-\frac{1}{2}} \sqrt{\frac{\mathcal{N}(\lambda)}{n}}.
\end{split}
\end{equation*}
\textit{Estimation of Term-2:} Let $\nu' = \lfloor \nu-s \rfloor$. Consider
\begin{equation*}
    \begin{split}
        \|(\mathbb{A}^*\mathbb{A})^{s}(\tilde{\beta} - \beta^*)\|_{\mathbb{L}^2} = & \|(\mathbb{A}^*\mathbb{A})^{s} (g_{\lambda}([\mathbb{A}^*\mathbb{A}]_{n})[\mathbb{A}^*\mathbb{A}]_{n}\beta^* - \beta^*)\|_{\mathbb{L}^2}\\
        = &  \|(\mathbb{A}^*\mathbb{A})^{s} r_{\lambda}([\mathbb{A}^*\mathbb{A}]_{n}) \varphi(\mathbb{A}^*\mathbb{A})h\|_{\mathbb{L}^2}\\
        \leq & \|(\mathbb{A}^*\mathbb{A})^{s}(\mathbb{A}^*\mathbb{A}+\lambda I)^{-s}\| \|(\mathbb{A}^*\mathbb{A}+\lambda I)^{s}([\mathbb{A}^*\mathbb{A}]_{n}+\lambda I)^{-s}\|_{\mathbb{L}^2}\\
        & \times \|([\mathbb{A}^*\mathbb{A}]_{n}+\lambda I)^{s}r_{\lambda}([\mathbb{A}^*\mathbb{A}]_{n})([\mathbb{A}^*\mathbb{A}]_{n}+\lambda I)^{\nu-s} \|_{\mathbb{L}^2 \to \mathbb{L}^2} \\
        & \times \|([\mathbb{A}^*\mathbb{A}]_{n}+\lambda I)^{-(\nu-s)}\varphi(\mathbb{A}^*\mathbb{A})h\|_{\mathbb{L}^2}.
    \end{split}
\end{equation*}
Now using $(\ref{qualification})$ and the fact that $\nu$ is the qualification of the regularization family, we see
\begin{equation*}
    \begin{split}
     \|(\mathbb{A}^*\mathbb{A})^{s}(\tilde{\beta} - \beta^*)\|_{\mathbb{L}^2} \lesssim_{p} &  \lambda^{\nu} \|([\mathbb{A}^*\mathbb{A}]_{n}+\lambda I)^{-(\nu-s)}\varphi(\mathbb{A}^*\mathbb{A})h\|_{\mathbb{L}^2}\\
        = & \lambda^{\nu} \|([\mathbb{A}^*\mathbb{A}]_{n}+\lambda I)^{-((\nu-s)-\nu')} ([\mathbb{A}^*\mathbb{A}]_{n}+\lambda I)^{-\nu'}\varphi(\mathbb{A}^*\mathbb{A})h\|_{\mathbb{L}^2}\\
        = & \lambda^{\nu}\|([\mathbb{A}^*\mathbb{A}]_{n}+\lambda I)^{-((\nu-s)-\nu')} (\mathbb{A}^*\mathbb{A}+\lambda I)^{(\nu-s)-\nu'}\\ 
        & \times (\mathbb{A}^*\mathbb{A}+\lambda I)^{-((\nu-s)-\nu')} ([\mathbb{A}^*\mathbb{A}]_{n}+\lambda I)^{-\nu'}\varphi(\mathbb{A}^*\mathbb{A})h\|_{\mathbb{L}^2}\\
        \lesssim & \lambda^{\nu}\|(\mathbb{A}^*\mathbb{A}+\lambda I)^{-((\nu-s)-\nu')} ([\mathbb{A}^*\mathbb{A}]_{n}+\lambda I)^{-\nu'}\varphi(\mathbb{A}^*\mathbb{A})h\|_{\mathbb{L}^2},
    \end{split}
\end{equation*}
where the last step follows from $(\ref{ch_5:eqn_powerplay})$. Next by adding and subtracting $(\mathbb{A}^*\mathbb{A}+\lambda I)^{-(\nu-s)}$ to the remaining term, we get
\begin{equation}\label{term2a_term2b}
    \begin{split}
    \|(\mathbb{A}^*\mathbb{A})^{s}(\tilde{\beta} - \beta^*)\|_{\mathbb{L}^2} \lesssim_{p} & \lambda^{\nu'+s} \underbrace{\|(([\mathbb{A}^*\mathbb{A}]_{n}+\lambda I)^{-\nu'}- (\mathbb{A}^*\mathbb{A}+\lambda I)^{-\nu'})\varphi(\mathbb{A}^*\mathbb{A})h \|_{\mathbb{L}^2}}_{\text{Term-2a}}\\
        & + \lambda^{\nu} \underbrace{\|(\mathbb{A}^*\mathbb{A}+\lambda I)^{-(\nu-s)}\varphi(\mathbb{A}^*\mathbb{A})h\|_{\mathbb{L}^2}}_{\text{Term-2b}}.
    \end{split}
\end{equation}
\textit{Bound of Term-2a:} With the use of \cite[Lemma A.3]{gupta2024optimal}, we get
\begin{equation}\label{reducingimp}
    \begin{split}
        & \|([\mathbb{A}^*\mathbb{A}]_{n}+\lambda I)^{-\nu'}\varphi(\mathbb{A}^*\mathbb{A})h - (\mathbb{A}^*\mathbb{A}+ \lambda I)^{-\nu'} \varphi(\mathbb{A}^*\mathbb{A})h\|_{\mathbb{L}^2} \\
        \leq & \|([\mathbb{A}^*\mathbb{A}]_{n}+\lambda I)^{-(\nu'-1)}[([\mathbb{A}^*\mathbb{A}]_{n}+\lambda I)^{-1} - ([\mathbb{A}^*\mathbb{A}]_{n}+\lambda I)^{-1}] \varphi(\mathbb{A}^*\mathbb{A})\|_{\mathbb{L}^2\to \mathbb{L}^2}\\
        & + \left\|\sum_{i=1}^{\nu'-1} ([\mathbb{A}^*\mathbb{A}]_{n}+\lambda I)^{-i} (\mathbb{A}^*\mathbb{A} - [\mathbb{A}^*\mathbb{A}]_{n}) (\mathbb{A}^*\mathbb{A}+ \lambda I)^{-(\nu'+1-i)} \varphi(\mathbb{A}^*\mathbb{A}) \right\|_{\mathbb{L}^2 \to \mathbb{L}^2}\\
        \lesssim & \frac{1}{\lambda^{\nu'-1}} \|([\mathbb{A}^*\mathbb{A}]_{n}+\lambda I)^{-\frac{1}{2}} (\mathbb{A}^*\mathbb{A} - [\mathbb{A}^*\mathbb{A}]_{n}) (\mathbb{A}^*\mathbb{A}+ \lambda I)^{-\frac{1}{2}} \varphi(\mathbb{A}^*\mathbb{A})\|_{\mathbb{L}^2\to \mathbb{L}^2}\\
        \lesssim & \frac{1}{\lambda^{\nu'-1}} \|([\mathbb{A}^*\mathbb{A}]_{n}+\lambda I)^{-\frac{1}{2}} (\mathbb{A}^*\mathbb{A} - [\mathbb{A}^*\mathbb{A}]_{n}) (\mathbb{A}^*\mathbb{A}+ \lambda I)^{-\frac{1}{2}}\|_{\mathbb{L}^2 \to \mathbb{L}^2}\\
        \lesssim_{p} & \frac{1}{\lambda^{\nu'-\frac{1}{2}}} \left[\frac{2 \tilde{\kappa}^2 \log(\frac{2}{\delta})}{n \sqrt{\lambda}} + \sqrt{\frac{ 4 \tilde{\kappa}^2\mathcal{N}(\lambda) (\log(\frac{2}{\delta}))^2 }{n}}\right],
    \end{split}
\end{equation}
where the last step follows from Lemma \ref{ch_5:empirical_vs_original}.\\

\noindent
\textit{Bound of Term-2b:} By using the spectral properties of operator $\mathbb{A}^*\mathbb{A}$, we have
\begin{equation*}
\begin{split}
    \|(\mathbb{A}^*\mathbb{A}+\lambda I)^{-(\nu-s)}\varphi(\mathbb{A}^*\mathbb{A})h\|_{\mathbb{L}^2} = & \sqrt{\sum_{m,k}\langle (\mathbb{A}^*\mathbb{A}+\lambda I)^{-(\nu-s)}\varphi(\mathbb{A}^*\mathbb{A})h, \phi_{m}\otimes e_{k}\rangle_{\mathbb{L}^2}^2}\\
    = & \sqrt{\sum_{m,k}\frac{(\varphi(\mu_{m}))^2\langle h, \phi_{m}\otimes e_{k} \rangle_{\mathbb{L}^2}^2}{(\mu_{m}+\lambda)^{2(\nu-s)}}}\\
    \lesssim & \sup_{i}\left|\frac{\varphi(\mu_{i})}{(\mu_{i}+\lambda)^{(\nu-s)}}\right| \|h\|_{\mathbb{L}^2}.
\end{split}
\end{equation*}
Let us assume that $\varphi$ satisfies equation $(\ref{ch_5:eq_cover_index_function})$ for $p = \nu-s$, then Lemma \ref{source_cover_qualification} concludes that
\begin{equation*}
    \|(\mathbb{A}^*\mathbb{A}+\lambda I)^{-(\nu-s)}\varphi(\mathbb{A}^*\mathbb{A})h\|_{\mathbb{L}^2} \lesssim \varphi(\lambda) \lambda^{s-\nu}.
\end{equation*}
Putting these bounds in $(\ref{term2a_term2b})$ will give us
\begin{equation*}
    \|(\mathbb{A}^*\mathbb{A})^s(\tilde{\beta}-\beta^*)\|_{\mathbb{L}^2} \lesssim_{p} \varphi(\lambda) \lambda^{s}
        + {\lambda^{s-\frac{1}{2}}} \left[\frac{2 \tilde{\kappa}^2 \log(\frac{2}{\delta})}{n \sqrt{\lambda}} + \sqrt{\frac{ 4 \tilde{\kappa}^2 \mathcal{N}(\lambda) (\log(\frac{2}{\delta}))^2 }{n}}\right].
\end{equation*}
Combining all these bounds will conclude that
\begin{equation*}
    \begin{split}
        \|(\mathbb{A}^*\mathbb{A})^s(\hat{\beta}_{\lambda}-\beta^*)\|_{\mathbb{L}^2}  \lesssim_{p} & \lambda^{s-\frac{1}{2}} \sqrt{\frac{\mathcal{N}(\lambda)}{n}} +  \varphi(\lambda) \lambda^{s}\\
         & + {\lambda^{s-\frac{1}{2}}} \left[\frac{2 \tilde{\kappa}^2 \log(\frac{2}{\delta})}{n \sqrt{\lambda}} + \sqrt{\frac{4 \tilde{\kappa}^2\mathcal{N}(\lambda) (\log(\frac{2}{\delta}))^2 }{n}}\right].
    \end{split}
\end{equation*}
\end{proof}

\begin{corollary}\label{ch_5:final_bound}
 Suppose Assumptions \ref{as:1}-\ref{as:3} hold. Let $\psi(x) = \varphi(x) x^{\frac{1}{2}+\frac{1}{2b}}$ and $\nu \geq 1$ be the qualification of the regularization family. Then for $0 \leq s \leq \frac{1}{2}$ and $\lambda = \psi^{-1}(n^{-\frac{1}{2}})$, with at least probability $1-\delta$, we have
\begin{equation*}
    \|(\mathbb{A}^*\mathbb{A})^s(\hat{\beta}_{\lambda}-\beta^*)\|_{\mathbb{L}^2} \lesssim_{p} \varphi(\psi^{-1}(n^{-\frac{1}{2}})) \psi^{-s}(n^{-\frac{1}{2}}).
\end{equation*}
\end{corollary}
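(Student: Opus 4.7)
The plan is to start from the high-probability bound in Theorem~\ref{ch_5:upper_bound_main_theorem} and optimize the regularization parameter $\lambda$ using the eigenvalue decay provided by Assumption~\ref{as:3}. The preliminary reduction has already been recorded in the excerpt: Assumption~\ref{as:3} yields the effective-dimension estimate $\mathcal{N}(\lambda) \lesssim \lambda^{-1/b}$. Plugging this into Theorem~\ref{ch_5:upper_bound_main_theorem}, the leading ``variance'' term of the bound becomes $\lambda^{s-\frac12}\sqrt{\mathcal{N}(\lambda)/n} \lesssim \lambda^{s-\frac{1}{2}-\frac{1}{2b}}/\sqrt{n}$, while the ``bias'' term is $\varphi(\lambda)\lambda^{s}$, and the residual concentration piece carries an extra factor $\log(2/\delta)$.

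Next I would balance variance against bias. Equating $\lambda^{-\frac12-\frac{1}{2b}}/\sqrt{n}$ with $\varphi(\lambda)$ rearranges to $\varphi(\lambda)\lambda^{\frac12+\frac{1}{2b}} = n^{-1/2}$, i.e.\ $\psi(\lambda) = n^{-1/2}$. Since $\psi$ is continuous and strictly increasing (being the product of the non-decreasing index function $\varphi$, with $\varphi(0)=0$, and a positive power of $\lambda$), it is invertible on a neighborhood of $0$, and the balancing choice is exactly $\lambda_* = \psi^{-1}(n^{-1/2})$ as given in the statement. At this $\lambda_*$ both balanced terms collapse to $\varphi(\lambda_*)\lambda_*^{s} = \varphi(\psi^{-1}(n^{-1/2}))\,\psi^{-s}(n^{-1/2})$, which is precisely the right-hand side of the corollary.

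It then remains to verify that the remainder
\[
\lambda^{s-\frac12}\!\left[\tfrac{2\tilde{\kappa}^{2}\log(2/\delta)}{n\sqrt{\lambda}} + \sqrt{\tfrac{4\tilde{\kappa}^{2}\mathcal{N}(\lambda)\log^{2}(2/\delta)}{n}}\right]
\]
is absorbed into the leading order at $\lambda=\lambda_*$. The second summand reduces, via $\mathcal{N}(\lambda)\lesssim\lambda^{-1/b}$, to the same scale $\lambda^{s-\frac12-\frac{1}{2b}}/\sqrt{n}$ as the variance term, up to a $\log(2/\delta)$ factor; this factor is hidden by the $\lesssim_{p}$ notation. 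The first summand scales as $\lambda^{s-1}\log(2/\delta)/n$ and is of order $n^{-1}$ rather than $n^{-1/2}$ in the sample size; it is therefore dominated by the leading $n^{-1/2}$-scale rate for $n$ sufficiently large, provided $\lambda_*$ does not decay too quickly.

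The main obstacle is this last comparison: when $\varphi$ is highly smoothing (so $\varphi$ decays rapidly at $0$), the optimal $\lambda_*$ itself decays slowly, and one must check that $1/(n\lambda_*\varphi(\lambda_*))$ remains bounded. Using the identity $\varphi(\lambda_*)\lambda_*^{\frac12+\frac{1}{2b}}=n^{-1/2}$, this ratio equals $n^{-1/2}\lambda_*^{-(b-1)/(2b)}$, which is $o(1)$ whenever $\lambda_* \to 0$ slower than $n^{-b/(b-1)}$; this is guaranteed by $b>1$ combined with the basic fact that any index function $\varphi$ forces $\lambda_*\gtrsim n^{-1/(1+1/b)}$ via $\psi(\lambda_*)\geq \lambda_*^{\frac12+\frac{1}{2b}}\varphi(\lambda_*)$ and the monotonicity of $\varphi$. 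Collecting these estimates and folding the $\log(2/\delta)$ factors into the $\lesssim_{p}$ convention yields the stated rate $\varphi(\psi^{-1}(n^{-1/2}))\,\psi^{-s}(n^{-1/2})$.
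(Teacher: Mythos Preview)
Your proposal is correct and follows essentially the same approach as the paper: insert $\mathcal{N}(\lambda)\lesssim\lambda^{-1/b}$ from Assumption~\ref{as:3} into Theorem~\ref{ch_5:upper_bound_main_theorem} to obtain the three-term bound $\lambda^{s-\frac12-\frac{1}{2b}}/\sqrt{n}+\varphi(\lambda)\lambda^{s}+\lambda^{s-1}/n$, then plug in $\lambda=\psi^{-1}(n^{-1/2})$. Your argument is in fact more thorough than the paper's own proof, which simply asserts that this substitution ``concludes the result'' without explicitly checking that the $\lambda^{s-1}/n$ term is absorbed; your verification of that step (via the boundedness of $\varphi$ on $[0,\eta]$ yielding $\lambda_*\gtrsim n^{-b/(b+1)}\gg n^{-b/(b-1)}$) is the right way to close that gap.
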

\begin{proof}
    Using Assumption \ref{as:3}, we have that $\mathcal{N}(\lambda) \lesssim \lambda^{-\frac{1}{b}}$. Putting this in Theorem \ref{ch_5:upper_bound_main_theorem}, we have
    \begin{equation*}
        \begin{split}
            \|(\mathbb{A}^*\mathbb{A})^s(\hat{\beta}_{\lambda}-\beta^*)\|_{\mathbb{L}^2} \lesssim_{p} \frac{\lambda^{s-\frac{1}{2}-\frac{1}{2b}}}{\sqrt{n}} + \varphi(\lambda)\lambda^s + \frac{\lambda^{s-1}}{n}. 
        \end{split}
    \end{equation*}
Putting $\lambda = \psi^{-1}(n^{-\frac{1}{2}})$ in above equation concludes the result.
\end{proof}
\begin{remark}
    The convergence rates established in Theorem \ref{ch_5:upper_bound_main_theorem} and Corollary \ref{ch_5:final_bound} extend the findings of \cite{polynomial2023regularization} by providing upper bounds for the polynomial regression model with functional responses. Additionally, they eliminate the extra conditions previously imposed on the index function in \cite{polynomial2023regularization}.
\end{remark}

\begin{remark}
Assuming the Hölder source condition, characterized by $\varphi(t) = t^r$ for $r \geq 0$, Corollary~\ref{ch_5:final_bound} implies the following bound:
$$\|(\mathbb{A}^*\mathbb{A})^s(\hat{\beta}_{\lambda}-\beta^*)\|_{\mathbb{L}^2} \lesssim_{p} n^{-\frac{b(r+s)}{1+b+2rb}}.$$ As expected, this convergence rate coincides with the rate established for scalar-valued polynomial regression models under Hölder source conditions in \cite{polynomial2023regularization}. However, while the results in \cite{polynomial2023regularization} are limited to the range $0 \leq r \leq 1$, our analysis holds for all positive values of $r$.
\end{remark}
We now present several key lemmas that are essential in proving the main results of this work. Before coming to these lemmas, it is worth noting that analogous lemmas have been derived in \cite{polynomial2023regularization} for the scalar-on-function polynomial regression model. However, the results in \cite{polynomial2023regularization} rely directly on a Bernstein-type inequality, whose applicability fundamentally depends on the associated operator $\mathbb{A}^*\mathbb{A}$ being Hilbert–Schmidt. Since in our setting $\mathbb{A}^*\mathbb{A}$ does not possess the Hilbert–Schmidt property, this main requirement for the Bernstein-type inequality is not satisfied. Consequently, the arguments from \cite{polynomial2023regularization} cannot be directly extended to our framework, and needs a separate analysis.

\noindent
Observe that $\|([\mathbb{A}^*\mathbb{A}]_{n}-\mathbb{A}^*\mathbb{A})(\mathbb{A}^*\mathbb{A}+\lambda I)^{-1}\|_{\mathbb{L}^2 \to \mathbb{L}^2} \leq \frac{1}{\sqrt{\lambda}}\|([\mathbb{A}^*\mathbb{A}]_{n}-\mathbb{A}^*\mathbb{A})(\mathbb{A}^*\mathbb{A}+\lambda I)^{-\frac{1}{2}}\|_{\mathbb{L}^2 \to \mathbb{L}^2}$. Further we can easily deduce that
    \begin{equation*}
        \begin{split}
         \|(\mathbb{A}^*\mathbb{A}+\lambda I)([\mathbb{A}^*\mathbb{A}]_n+\lambda I)^{-1}\|_{\mathbb{L}^2 \to \mathbb{L}^2}
            = & \|[I-(\mathbb{A}^*\mathbb{A}- [\mathbb{A}^*\mathbb{A}]_{n})(\mathbb{A}^*\mathbb{A}+ \lambda I)^{-1}]^{-1}\|_{\mathbb{L}^2 \to \mathbb{L}^2}\\
             \leq & \frac{1}{1- \|(\mathbb{A}^*\mathbb{A}- [\mathbb{A}^*\mathbb{A}]_{n})(\mathbb{A}^*\mathbb{A}+\lambda I)^{-1}\|_{\mathbb{L}^2 \to \mathbb{L}^2}},
        \end{split}
    \end{equation*}
    provide that $\|(\mathbb{A}^*\mathbb{A}- [\mathbb{A}^*\mathbb{A}]_{n})(\mathbb{A}^*\mathbb{A}+\lambda I)^{-1}\|_{\mathbb{L}^2 \to \mathbb{L}^2} < 1$.\\

\noindent
In the following lemma, we provide a bound on the above defined terms that further will be used to make the bound strictly less than 1.

\begin{lemma}\label{ch_5:empirical_vs_original} Let $\delta >0$ then with probability at least $1-\delta$, we have
    \begin{equation*}
        \|(\mathbb{A}^*\mathbb{A}+\lambda I)^{-\frac{1}{2}}(\mathbb{A}^*\mathbb{A}- [\mathbb{A}^*\mathbb{A}]_{n})(\mathbb{A}^*\mathbb{A}+\lambda I)^{-\frac{1}{2}}\|_{\mathbb{L}^2 \to \mathbb{L}^2} \leq \frac{1}{\sqrt{\lambda}} \left[\frac{2 \tilde{\kappa}^2}{n \sqrt{\lambda}} + \sqrt{\frac{4 \tilde{\kappa}^2\mathcal{N}(\lambda)}{n}}\right]\log(2/\delta).
    \end{equation*}
\end{lemma}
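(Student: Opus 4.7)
The plan is to circumvent the lack of a Hilbert--Schmidt structure of $\mathbb{A}^*\mathbb{A}$ by exploiting the tensor-product representation already identified in Section~\ref{ch_5:sec:model_and_preliminaries}. Under the isometric isomorphism $\mathbb{L}^2 \cong L^2(S_2)\otimes\bigl(\oplus_{l=0}^{p} L_l^2\bigr)$ together with the identity $\mathbb{A}^*\mathbb{A}(f\otimes g) = f\otimes \Gamma g$, one obtains the factorisations
$$\mathbb{A}^*\mathbb{A} = I_{L^2(S_2)}\otimes \Gamma,\qquad [\mathbb{A}^*\mathbb{A}]_n = I_{L^2(S_2)}\otimes \Gamma_n,$$
where $\Gamma_n := \tfrac1n\sum_{i=1}^{n}\chi(\omega_i)\boldsymbol{\otimes}\chi(\omega_i)$ is the empirical analogue of $\Gamma$. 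Because $\|I\otimes T\|_{\mathbb{L}^2\to\mathbb{L}^2} = \|T\|_{\mathrm{op}}$ and the functional calculus of $I\otimes\Gamma$ is $I\otimes f(\Gamma)$, the problem reduces to establishing the scalar counterpart
$$\bigl\|(\Gamma+\lambda I)^{-1/2}(\Gamma - \Gamma_n)(\Gamma+\lambda I)^{-1/2}\bigr\|_{\mathrm{op}} \;\le\; \frac{\log(2/\delta)}{\sqrt{\lambda}}\left[\frac{2\tilde\kappa^2}{n\sqrt{\lambda}} + \sqrt{\frac{4\tilde\kappa^2\mathcal{N}(\lambda)}{n}}\right].$$

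The crucial point is that, unlike $\mathbb{A}^*\mathbb{A}$, the operator $\Gamma$ \emph{is} trace class under Assumption~\ref{as:2}, so standard operator-concentration is applicable. Introduce the i.i.d.\ rank-one random operators
$$\xi_i := (\Gamma+\lambda I)^{-1/2}\bigl(\chi(\omega_i)\boldsymbol{\otimes}\chi(\omega_i)\bigr)(\Gamma+\lambda I)^{-1/2}$$
on $\oplus_{l=0}^{p} L_l^2$, viewed as elements of the Hilbert space of Hilbert--Schmidt operators. Then $\mathbb{E}\xi_i = (\Gamma+\lambda I)^{-1/2}\Gamma(\Gamma+\lambda I)^{-1/2}$ and the target quantity is precisely $\bigl\|\tfrac1n\sum_i(\xi_i - \mathbb{E}\xi_i)\bigr\|_{\mathrm{op}}$. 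Using that $\chi\boldsymbol{\otimes}\chi$ is rank one, one computes
$$\|\xi_i\|_{\mathrm{HS}} = \bigl\|(\Gamma+\lambda I)^{-1/2}\chi(\omega_i)\bigr\|^2 \le \frac{\|\chi(\omega_i)\|^2}{\lambda} \le \frac{\tilde\kappa^2}{\lambda},$$
$$\mathbb{E}\|\xi_i\|_{\mathrm{HS}}^2 = \mathbb{E}\bigl\|(\Gamma+\lambda I)^{-1/2}\chi\bigr\|^4 \le \frac{\tilde\kappa^2}{\lambda}\,\mathbb{E}\bigl\|(\Gamma+\lambda I)^{-1/2}\chi\bigr\|^2 = \frac{\tilde\kappa^2}{\lambda}\,\mathrm{trace}\bigl((\Gamma+\lambda I)^{-1}\Gamma\bigr) = \frac{\tilde\kappa^2\mathcal{N}(\lambda)}{\lambda},$$
where $\tilde\kappa^2 = 1+\sum_{l=1}^{p}\kappa^{l}$ is the uniform bound on $\|\chi\|^2_{\oplus_l L_l^2}$ provided by Assumption~\ref{as:2}.

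With these two moment estimates in hand, the Pinelis (operator-Bernstein) inequality for Hilbert-space-valued i.i.d.\ sums yields, with probability at least $1-\delta$,
$$\Bigl\|\tfrac1n\textstyle\sum_i(\xi_i-\mathbb{E}\xi_i)\Bigr\|_{\mathrm{HS}} \;\le\; \frac{2\tilde\kappa^2\log(2/\delta)}{n\lambda} + \sqrt{\frac{2\tilde\kappa^2\mathcal{N}(\lambda)\log(2/\delta)}{n\lambda}},$$
and the stated bound follows by dominating $\sqrt{\log(2/\delta)}$ by $\log(2/\delta)$ (for $\delta\le 2/e$), re-absorbing constants, and using $\|\cdot\|_{\mathrm{op}}\le\|\cdot\|_{\mathrm{HS}}$.

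\textbf{Main obstacle.} The only conceptually delicate step is the tensor reduction: recognising that the non-Hilbert--Schmidt operator $\mathbb{A}^*\mathbb{A}$ factors as $I\otimes\Gamma$ with $\Gamma$ trace class, which is what rescues the Bernstein-type argument that would otherwise fail (as the authors flag just before the lemma). Once this structural identification is made, the analysis descends to the trace-class operator $\Gamma$ and the rest is a routine Pinelis-type calculation.
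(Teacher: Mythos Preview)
Your proof is correct and shares the same core idea as the paper's: reduce the bound on the non--Hilbert--Schmidt operator $\mathbb{A}^*\mathbb{A}$ to a Hilbert--Schmidt bound involving the trace-class operator $\Gamma$, then invoke a Pinelis/Bernstein concentration inequality. The difference lies only in how the reduction is executed. You use the clean tensor factorisation $\mathbb{A}^*\mathbb{A}=I\otimes\Gamma$, $[\mathbb{A}^*\mathbb{A}]_n=I\otimes\Gamma_n$ together with $\|I\otimes T\|=\|T\|$, so that the target immediately collapses to $\|(\Gamma+\lambda I)^{-1/2}(\Gamma-\Gamma_n)(\Gamma+\lambda I)^{-1/2}\|$, which you then bound directly via Pinelis applied to the rank-one $\xi_i$. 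The paper instead first strips off one resolvent factor crudely (giving the leading $1/\sqrt{\lambda}$), then expands $\|(\mathbb{A}^*\mathbb{A}+\lambda I)^{-1/2}(\mathbb{A}^*\mathbb{A}-[\mathbb{A}^*\mathbb{A}]_n)f\|^2$ in the eigenbasis $\{\phi_m\otimes e_k\}$ and, after a page of coordinate manipulations, arrives at the domination by $\|(\Gamma+\lambda I)^{-1/2}(\Gamma-\Gamma_n)\|_{HS}$, before citing \cite[Lemma~3]{polynomial2023regularization} for the concentration step. Your route is more transparent and avoids the index-chasing; the paper's route is essentially proving your tensor identity by hand in coordinates. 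One small caveat: the identity $[\mathbb{A}^*\mathbb{A}]_n=I\otimes\Gamma_n$ is not stated explicitly in the paper (only the analogue for $\mathbb{A}^*\mathbb{A}$ is), so strictly speaking you should verify it from the definitions of $B_{i,l}$ and $B_{i,l}^*$, which is a one-line check.
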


\begin{proof}
\begin{equation*}
    \|(\mathbb{A}^*\mathbb{A}+\lambda I)^{-\frac{1}{2}}(\mathbb{A}^*\mathbb{A}- [\mathbb{A}^*\mathbb{A}]_{n})(\mathbb{A}^*\mathbb{A}+\lambda I)^{-\frac{1}{2}}\|_{\mathbb{L}^2 \to \mathbb{L}^2} \leq \frac{1}{\sqrt{\lambda}} \|(\mathbb{A}^*\mathbb{A}+\lambda I)^{-\frac{1}{2}}(\mathbb{A}^*\mathbb{A}- [\mathbb{A}^*\mathbb{A}]_{n})\|_{\mathbb{L}^2 \to \mathbb{L}^2}.\\
\end{equation*}

\noindent
Let us fix a $f \in \mathbb{L}^2$ such that $\|f\|_{\mathbb{L}^2} = 1$. Then
\begin{equation*}
    \begin{split}
       & \|(\mathbb{A}^*\mathbb{A}+\lambda I)^{-\frac{1}{2}}(\mathbb{A}^*\mathbb{A}- [\mathbb{A}^*\mathbb{A}]_{n})f\|_{\mathbb{L}^2}^2\\
       & \qquad \qquad =  \sum_{k',m'}\langle (\mathbb{A}^*\mathbb{A}+\lambda I)^{-\frac{1}{2}}(\mathbb{A}^*\mathbb{A}- [\mathbb{A}^*\mathbb{A}]_{n})f, \phi_{m'}\otimes e_{k'} \rangle_{\mathbb{L}^2}^2\\
         & \qquad \qquad = \sum_{k',m'} \frac{\langle(\mathbb{A}^*\mathbb{A}- [\mathbb{A}^*\mathbb{A}]_{n})f, \phi_{m'}\otimes e_{k'} \rangle_{\mathbb{L}^2}^2}{\mu_{m'}+\lambda}\\
         & \qquad \qquad = \sum_{k',m'}\left(\sum_{k,j}\langle [A_{k}^*A_{j}-\frac{1}{n}\sum_{i=1}^{n}B_{i,k}^*B_{i,j}]f_{j}, \phi_{m',k}\otimes e_{k'} \rangle_{L^2(S_{2})\otimes L_{k}^2}\right)^2,
    \end{split}
\end{equation*}
where $\phi_{m'} = (\phi_{m',0}, \ldots, \phi_{m',p})$. Next, we consider
\begin{equation*}
    \begin{split}
        & \langle [A_{k}^*A_{j}-\frac{1}{n}\sum_{i=1}^{n}B_{i,k}^*B_{i,j}]f_{j}, \phi_{m',k}\otimes e_{k'} \rangle_{L^2(S_{2})\otimes L_{k}^2}\\
        & \qquad = \int_{\Omega} \langle \langle \chi_{j}, f_{j}\rangle_{L_{j}^2}, e_{k'}\rangle_{L^2(S_2)} \langle \chi_{k}, \phi_{m',k} \rangle_{L^2_{k}} d\mathbb{P}(\omega)- \frac{1}{n}\sum_{i=1}^{n} \langle \langle \chi_{j}^{i}, f_{j}\rangle_{L_{j}^2}, e_{k'}\rangle_{L^2(S_2)} \langle \chi_{k}^{i}, \phi_{m',k} \rangle_{L^2_{k}},
    \end{split}
\end{equation*}
where $\chi_{0}^{i} = 1$ and $\chi_{l}^{i}(s_{1},\ldots, s_{l}) = \prod_{j=1}^{l}X_{j}(s_{j})$.\\

\noindent
Putting things back, we get
\begin{equation*}
    \begin{split}
       & \langle (\mathbb{A}^*\mathbb{A}-[\mathbb{A}^*\mathbb{A}]_{n})f, \phi_{m'}\otimes e_{k'} \rangle_{\mathbb{L}^2}\\
       & \qquad = \int_{\Omega} \langle \langle \chi, f \rangle_{\oplus_{j=0}^{p}L^2_{j}},e_{k'}\rangle_{L^2(S_{2})} \langle \chi, \phi_{m'}\rangle_{\oplus_{k=0}^{p}L^2_{k}} d\mathbb{P}(\omega) - \frac{1}{n}\sum_{i=1}^{n} \langle \langle \chi^{i}, f \rangle_{\oplus_{j=0}^{p}L^2_{j}},e_{k'}\rangle_{L^2(S_{2})} \langle \chi^{i}, \phi_{m'}\rangle_{\oplus_{k=0}^{p}L^2_{k}}\\
       & \qquad = \langle \langle (\mathbb{E}[\chi \otimes \chi]-\frac{1}{n}\sum_{i=1}^{n}\chi^{i}\otimes \chi^{i})\phi_{m'},f \rangle_{\oplus_{j=0}^{p}L_{j}^2},e_{k'}\rangle_{L^2(S_2)}.
    \end{split}
\end{equation*}
 Hence, we have

\begin{equation*}
    \begin{split}
        \|(\mathbb{A}^*\mathbb{A}+\lambda I)^{-\frac{1}{2}}(\mathbb{A}^*\mathbb{A}- [\mathbb{A}^*\mathbb{A}]_{n})f\|_{\mathbb{L}^2}^2 = & \sum_{m'} \frac{\|\langle (\mathbb{E}[\chi \otimes \chi]-\frac{1}{n}\sum_{i=1}^{n}\chi^{i}\otimes \chi^{i})\phi_{m'},f \rangle_{\oplus_{j=0}^{p}L_{j}^2}\|^2_{L^2(S_2)}}{\mu_{m'}+\lambda}\\
        \leq & \sum_{m'} \frac{\|(\Gamma- \Gamma_{n})\phi_{m'}\|^2_{\oplus_{j=0}^{p}L_{j}^2}}{\mu_{m'}+\lambda} \|f\|_{\mathbb{L}^2}^2 \\
        = & \sum_{m'}\frac{\|(\Gamma-\Gamma_{n})\phi_{m'}\|^2_{\oplus_{j=0}^{p}L_{j}^2}}{\mu_{m'}+\lambda} = \|(\Gamma-\Gamma_{n})(\Gamma+\lambda I)^{-\frac{1}{2}}\|^2_{HS}\\
        = & \|(\Gamma+\lambda I)^{-\frac{1}{2}}(\Gamma-\Gamma_{n})\|^2_{HS}.
    \end{split}
\end{equation*}
Taking supremum over all such $f$, we have
\begin{equation*}
    \|(\mathbb{A}^*\mathbb{A}+\lambda I)^{-\frac{1}{2}}(\mathbb{A}^*\mathbb{A}- [\mathbb{A}^*\mathbb{A}]_{n})\|_{\mathbb{L}^2\to \mathbb{L}^2} \leq \|(\Gamma+\lambda I)^{-\frac{1}{2}}(\Gamma-\Gamma_{n})\|_{HS}.
\end{equation*}
From \cite[Lemma 3]{polynomial2023regularization}, for any $\delta \in (0,1)$, with probability at least $1-\delta$, we have that
\begin{equation*}
    \|(\Gamma+\lambda I)^{-\frac{1}{2}}(\Gamma-\Gamma_{n})\|_{HS} \leq \left(\frac{2 \tilde{\kappa}^2}{n \sqrt{\lambda}}+\sqrt{\frac{4 \tilde{\kappa}^2\mathcal{N}(\lambda)}{n}}\right)\log(2/\delta),
\end{equation*}
where $\tilde{\kappa}$ is some positive constant.
\end{proof}

\noindent
Along with Lemma~\ref{ch_5:empirical_vs_original} and Assumption~\ref{as:3}, we can see that
\begin{equation}\label{ch_5:rmk_power}
\|(\mathbb{A}^*\mathbb{A}+\lambda I)([\mathbb{A}^*\mathbb{A}]_{n}+\lambda I)^{-1}\|_{\mathbb{L}^2 \to \mathbb{L}^2} \leq 2,~ \forall~\lambda \geq C_{3}n^{-\frac{b}{1+b}},
\end{equation}
where $C_{3}$ is some positive constant.
\begin{lemma}\cite[Lemma 5.1]{cordes1987}
\label{cordes}
Suppose $T_1$ and $T_2$ are two positive bounded linear operators on a separable Hilbert space. Then
$$\|T_1^pT_2^p\| \leq \|T_1T_2\|^p, \text{ when } 0\leq p \leq 1. $$
\end{lemma}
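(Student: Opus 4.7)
The plan is to establish the Cordes inequality $\|T_1^p T_2^p\| \leq \|T_1 T_2\|^p$ for $0 \le p \le 1$ in three stages, reducing a statement about operator norms to one about spectral radii. First, I would reduce to the case of strictly positive (invertible) $T_1, T_2$ by replacing each $T_i$ with $T_i + \epsilon I$ and passing to the limit $\epsilon \to 0$, using continuity of fractional powers and of the operator norm in this regime. Second, I would derive the identity
\[
\|T_1^p T_2^p\|^2 \;=\; \|T_2^p T_1^{2p} T_2^p\| \;=\; r\bigl(T_1^{2p} T_2^{2p}\bigr),
\]
where $r(\cdot)$ denotes the spectral radius. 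The first equality is the $C^*$-identity $\|S^*S\| = \|S\|^2$ applied to $S = T_1^p T_2^p$; the second uses that $T_2^p T_1^{2p} T_2^p$ is positive (hence its norm equals its spectral radius) together with cyclicity of the spectral radius. Specializing at $p=1$ yields $\|T_1 T_2\|^2 = r(T_1^2 T_2^2)$, so the target inequality becomes $r(T_1^{2p} T_2^{2p}) \leq r(T_1^2 T_2^2)^p$.

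As a sanity check, the case $p = 1/2$ falls out immediately: the identity gives $\|T_1^{1/2} T_2^{1/2}\|^2 = r(T_1 T_2) \le \|T_1 T_2\|$. Iterating the identity (replacing the pair $(T_1,T_2)$ with $(T_1^{1/2},T_2^{1/2})$, etc.) produces $\|T_1^{1/2^n} T_2^{1/2^n}\| \le \|T_1 T_2\|^{1/2^n}$ for every $n \ge 1$, which establishes the inequality on the dyadic sequence $\{1/2^n\}$.

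To reach arbitrary $p \in [0,1]$, the plan is to show that $s \mapsto \log r(T_1^s T_2^s)$ is convex on $[0,2]$. Combined with the boundary values $\log r(T_1^0 T_2^0) = 0$ and $\log r(T_1^2 T_2^2) = 2\log\|T_1 T_2\|$, convexity yields $\log r(T_1^{2p} T_2^{2p}) \le p \cdot 2\log\|T_1 T_2\|$, which rearranges into the desired bound. The natural route to convexity is complex interpolation: fix unit vectors $x, y$, define $F(z) = \langle T_1^z T_2^z x, y\rangle$ on the strip $\{0 \le \Re z \le 1\}$ using the holomorphic functional calculus (which is available once $T_i$ are strictly positive, since $T_i^z = \exp(z \log T_i)$ is entire), and invoke the Hadamard three-lines theorem on a suitably symmetrized variant.

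The main obstacle is precisely this convexity step. A direct application of three-lines to $F$ only controls the boundary via $\sup_t \|T_1^{1+it} T_2^{1+it}\|$ on $\Re z = 1$, which in general exceeds $\|T_1 T_2\|$ and yields only the weaker bound $\|T_1\|^p \|T_2\|^p$. The fix is to apply three-lines to an auxiliary analytic family whose boundary norm reduces to $\|T_1 T_2\|$ through the spectral-radius identity, for instance the symmetrized form $z \mapsto T_1^{z/2} T_2^z T_1^{z/2}$ (positive on the real axis, so its norm equals its spectral radius and, by cyclicity, equals $r(T_1^z T_2^z)$). Carrying this through carefully, with attention to uniformity in the imaginary direction, is where the technical substance of the proof sits; once this convexity is in hand, the chain of reductions above closes and delivers the inequality for all $p \in [0,1]$.
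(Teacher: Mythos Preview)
The paper does not give a proof of this lemma at all: it is stated with a citation to \cite[Lemma~5.1]{cordes1987} and used as a black box (to deduce \eqref{ch_5:eqn_powerplay} from \eqref{ch_5:rmk_power}). So there is no ``paper's own proof'' to compare against; any correct argument you supply goes beyond what the paper does.

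On your sketch itself: the reductions in the first two stages are sound, and the dyadic case $p=2^{-n}$ via iteration of the spectral-radius identity is a clean observation. The weak point is exactly where you flag it, the convexity/three-lines step for general $p$. Your proposed fix, passing to the symmetrized family $z\mapsto T_1^{z/2}T_2^{z}T_1^{z/2}$, does not obviously resolve the boundary issue: on $\Re z=2$ this gives $T_1^{1+it}T_2^{2+2it}T_1^{1+it}$, whose norm is again not a priori controlled by $\|T_1T_2\|^2$ without further argument, and the ``positive on the real axis'' remark does not help on the complex boundary. A more robust route, and the one usually taken in the literature, is to deduce Cordes from the L\"owner--Heinz inequality ($0\le A\le B \Rightarrow A^p\le B^p$ for $0\le p\le 1$): from $T_1 T_2^2 T_1 \le \|T_1T_2\|^2 I$ one gets $(T_1 T_2^2 T_1)^p \le \|T_1T_2\|^{2p} I$, and then an antisymmetric-tensor/exterior-power or Araki--Lieb--Thirring type argument (or Furuta's inequality) closes the gap to $T_1^p T_2^{2p} T_1^p$. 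If you want to stay with interpolation, it is cleaner to prove log-convexity of $s\mapsto \operatorname{tr}\,\phi(T_1^s T_2^s)$ for suitable $\phi$ (Araki's approach) rather than of the spectral radius directly.
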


\noindent
By simply combining $(\ref{ch_5:rmk_power})$ with Lemma \ref{cordes}, we can see that
\begin{equation}\label{ch_5:eqn_powerplay}
    \|(\mathbb{A}^*\mathbb{A}+\lambda I)^{q}([\mathbb{A}^*\mathbb{A}]_n+\lambda I)^{-q}\| \leq 2^{q}\quad \forall~
0 \leq q \leq 1.
\end{equation}

\begin{remark}
    The next lemma is an analogous of Lemma $4$ \cite{polynomial2023regularization}. The proof given in \cite{polynomial2023regularization} has used the Bernstein type condition on the $\epsilon$, here our proof uses the finite variance of the error term. 
\end{remark}
\begin{lemma}\label{empirical_bound}
For any $\delta >0$, with at least probability $1-\delta$, we have that
    \begin{equation*}
        \|(\mathbb{A}^*\mathbb{A}+\lambda I)^{-\frac{1}{2}}([\mathbb{A}^*\mathbb{A}]_{n}\beta^*- [\mathbb{A}^*Y]_{n})\|_{\mathbb{L}^2} \leq \sqrt{\frac{ \sigma^2 \mathcal{N}(\lambda)}{n \delta}}.
    \end{equation*}
\end{lemma}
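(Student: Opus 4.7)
Since $Y_i = \mathbb{B}_i\beta^* + \epsilon_i$, the residual admits the clean representation
\[
[\mathbb{A}^{*}\mathbb{A}]_{n}\beta^{*} - [\mathbb{A}^{*}Y]_{n} \;=\; -\frac{1}{n}\sum_{i=1}^{n}\mathbb{B}_{i}^{*}\epsilon_{i}.
\]
The plan is to let $\xi_i := (\mathbb{A}^{*}\mathbb{A}+\lambda I)^{-1/2}\mathbb{B}_{i}^{*}\epsilon_{i}$ and control $\frac{1}{n}\sum_i \xi_i$ via Markov's inequality applied to its second moment. The $1/\sqrt{\delta}$ dependence in the target bound (as opposed to the usual $\sqrt{\log(1/\delta)}$) is the unmistakable signature of a variance argument rather than an exponential concentration bound, which is precisely why only finite variance of $\epsilon$ is needed, as the preceding remark stresses. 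Since $\mathbb{E}[\epsilon_i \mid X_i] = 0$, the $\xi_i$ are i.i.d.\ mean-zero in $\mathbb{L}^2$, so $\mathbb{E}\|\frac{1}{n}\sum_i \xi_i\|_{\mathbb{L}^2}^2 = \frac{1}{n}\mathbb{E}\|\xi\|_{\mathbb{L}^2}^2$.

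The core computation is to establish
\[
\mathbb{E}\|(\mathbb{A}^{*}\mathbb{A}+\lambda I)^{-1/2}\mathbb{B}^{*}\epsilon\|_{\mathbb{L}^{2}}^{2} \;=\; \sigma^{2}\,\mathcal{N}(\lambda).
\]
I would expand in the orthonormal basis $\{\phi_{m}\otimes e_{k}\}_{m,k}$ of $\mathbb{L}^{2}\cong L^{2}(S_{2})\otimes(\oplus_{l=0}^{p}L_{l}^{2})$, which diagonalizes $\mathbb{A}^{*}\mathbb{A}$ with eigenvalue $\mu_m$ on $\phi_m\otimes e_k$. Unpacking the definitions of the blocks $B_l$ yields the pleasant identity
\[
\mathbb{B}(\phi_{m}\otimes e_{k})(t) \;=\; \langle \chi,\phi_{m}\rangle_{\oplus_{l=0}^{p} L_{l}^{2}}\, e_{k}(t),
\]
since each $B_l$ acts only on the $L_l^2$ component while leaving $e_k\in L^2(S_2)$ intact. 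Inserting this into the basis expansion gives
\[
\mathbb{E}\|\xi\|_{\mathbb{L}^2}^{2} \;=\; \sum_{m,k}\frac{\mathbb{E}[\langle \chi,\phi_{m}\rangle^{2}\langle \epsilon,e_{k}\rangle_{L^{2}(S_{2})}^{2}]}{\mu_{m}+\lambda}.
\]
Conditioning on $X$, using $\mathbb{E}[\|\epsilon\|_{L^2(S_2)}^2 \mid X]=\sigma^2$ to collapse the sum over $k$ by Parseval, and then $\mathbb{E}\langle \chi,\phi_m\rangle^2 = \langle \Gamma\phi_m,\phi_m\rangle = \mu_m$, this reduces to $\sigma^2\sum_m \mu_m/(\mu_m+\lambda) = \sigma^2 \mathcal{N}(\lambda)$. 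Dividing by $n$ and applying Markov's inequality with threshold $\sigma^2\mathcal{N}(\lambda)/(n\delta)$ then gives the stated bound with probability at least $1-\delta$.

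The principal obstacle is purely bookkeeping: carefully tracking the identification $L^{2}(S_{2})\otimes(\oplus_{l} L_{l}^{2})\cong \mathbb{L}^2$ so that the block identity for $\mathbb{B}(\phi_m\otimes e_k)$ separates cleanly into the $\chi$-factor and the $e_k$-factor. A secondary subtlety is that the model only states the unconditional bound $\mathbb{E}\|\epsilon\|^2=\sigma^2$, while the collapse step uses the conditional version $\mathbb{E}[\|\epsilon\|^2\mid X]=\sigma^2$; this is implicit in the standard regression setup and I would record it as a standing hypothesis, mirroring \cite{polynomial2023regularization}. Notably, in contrast to Lemma~\ref{ch_5:empirical_vs_original} (which required a Hilbert--Schmidt-type concentration argument on $\Gamma-\Gamma_n$), no trace-class or Hilbert--Schmidt structure on $\mathbb{A}^{*}\mathbb{A}$ enters here because the variance computation goes through $\Gamma$ directly via the basis $\{\phi_m\}$.
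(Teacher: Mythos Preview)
Your proposal is correct and follows essentially the same route as the paper: both write the residual as $-\frac{1}{n}\sum_i\mathbb{B}_i^*\epsilon_i$ (the paper phrases this as $\chi^i\epsilon_i$, which is the same object), expand $\|\xi\|_{\mathbb{L}^2}^2$ in the basis $\{\phi_m\otimes e_k\}$, collapse the $k$-sum via Parseval, reduce to $\sigma^2\mathcal{N}(\lambda)$ through $\mathbb{E}\langle\chi,\phi_m\rangle^2=\mu_m$, and finish with Markov's inequality. Your observation that the collapse step tacitly uses $\mathbb{E}[\|\epsilon\|^2\mid X]=\sigma^2$ (or independence of $\epsilon$ and $X$) is well taken; the paper performs exactly this step without flagging the hypothesis.
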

\begin{proof}
Recall the definition of $\chi(\omega) = (\chi_{k}(\omega))_{k=0}^{p}$, where $\chi_{0}(\omega) = 1$ and  $\chi_{k}(\omega) = \prod_{j=1}^{k} X(\omega, s_j), 1 \leq k \leq p$.\\
Consider the random variable
$$\xi(\omega) = (\mathbb{A}^*\mathbb{A}+ \lambda I)^{-\frac{1}{2}} \chi(\omega)(Y(\omega, t)- \mathbb{A}\beta^*(\omega, t))= (\mathbb{A}^*\mathbb{A}+ \lambda I)^{-\frac{1}{2}} \chi(\omega) \epsilon(\omega, t)$$ 
taking values in $\mathbb{L}^2$ and 
$$\xi_{i} = (\mathbb{A}^*\mathbb{A}+ \lambda I)^{-\frac{1}{2}}\mathbb{B}_i^*(Y_{i}- \sum_{j=0}^{p}B_{i,j}\beta_{j}).$$
Observe that $\mathbb{E}[\xi(\omega)] = \mathbb{E}[\epsilon(\omega, t)] \cdot \mathbb{E}[(\mathbb{A}^*\mathbb{A}+ \lambda I)^{-\frac{1}{2}} (\boldsymbol{1} \otimes \chi(\omega))]$ = 0 and
\begin{equation*}
    \begin{split}
        \frac{1}{n}\sum_{i=1}^{n}\xi_{i} = & \frac{1}{n}\sum_{i=1}^{n}(\mathbb{A}^*\mathbb{A}+ \lambda I)^{-\frac{1}{2}}(\mathbb{B}_i^*Y_{i} - \sum_{j=0}^{p}\mathbb{B}_i^*B_{i,j}\beta^*_{j})\\
        = & (\mathbb{A}^*\mathbb{A}+\lambda I)^{-\frac{1}{2}}([\mathbb{A}^*\mathbb{A}]_{n}\beta^*- [\mathbb{A}^*Y]_{n}),
    \end{split}
\end{equation*}
where $\boldsymbol{1}$ is the constant function on $S_{2}$ taking value $1$.\\

\noindent
So we can easily see that
\begin{equation}\label{variance}
    \mathbb{E}[\|(\mathbb{A}^*\mathbb{A}+\lambda I)^{-\frac{1}{2}}([\mathbb{A}^*\mathbb{A}]_{n}\beta^*- [\mathbb{A}^*Y]_{n})\|_{\mathbb{L}^2}^2] = \mathbb{E}\|\frac{1}{n}\sum_{i=1}^{n} \xi_{i}\|_{\mathbb{L}^2}^2 = \frac{\mathbb{E}\|\xi\|^2}{n}.
\end{equation}
Next we consider
\begin{equation*}
    \begin{split}
        \mathbb{E}[\|(\mathbb{A}^*\mathbb{A}+\lambda I)^{-\frac{1}{2}}\chi(\omega)\epsilon(\omega,t)\|^2_{\mathbb{L}^2}] = & \mathbb{E}\left[\sum_{m,k} \left\langle (\mathbb{A}^*\mathbb{A}+\lambda I)^{-\frac{1}{2}}\chi(\omega)\epsilon(\omega,t), \phi_{m}\otimes e_{k} \right\rangle_{\mathbb{L}^2}^{2}\right] \\
        = & \mathbb{E}\left[\sum_{m}\frac{\langle \chi(\omega)\epsilon(\omega,t),\phi_{m} \otimes e_{k} \rangle_{\mathbb{L}^2}^{2}}
        {\mu_{m}+\lambda}\right]\\
        = & \mathbb{E} \left[\sum_{m,k}\frac{\langle \chi(\omega), \phi_{m} \rangle_{\oplus_{l=0}^{p}L^2_{l}}^2 \langle \epsilon(\omega,\cdot), e_{k} \rangle_{L^2(S_2)}^2}{\mu_{m}+\lambda}\right]\\
        = & \mathbb{E}\left[\sum_{m}\frac{\langle \chi(\omega), \phi_{m} \rangle_{\oplus_{l=0}^{p}L^2_{l}}^2 \| \epsilon(\omega,\cdot)\|_{L^2(S_2)}^2}{\mu_{m}+\lambda}\right]\\
        = & \sigma^2 \left[\sum_{m}\frac{\langle \Gamma\phi_{m},\phi_{m} \rangle_{\oplus_{l=0}^{p}L^2_{l}}}
        {\mu_{m}+\lambda}\right] = \sigma^2 \mathcal{N}(\lambda).
    \end{split}
\end{equation*}
Putting it in $(\ref{variance})$, we have
\begin{equation*}
    \begin{split}
     \mathbb{E}[\|(\mathbb{A}^*\mathbb{A}+\lambda I)^{-\frac{1}{2}}([\mathbb{A}^*\mathbb{A}]_{n}\beta^*- [\mathbb{A}^*Y]_{n})\|^2] \leq &\frac{\mathcal{N}(\lambda)}{n} \sigma^2.
    \end{split}
\end{equation*}
Now we apply Markov inequality to conclude that
\begin{equation*}
    \|(\mathbb{A}^*\mathbb{A}+\lambda I)^{-\frac{1}{2}}([\mathbb{A}^*\mathbb{A}]_{n}\beta^*- [\mathbb{A}^*Y]_{n})\|_{\mathbb{L}^2} \leq \sigma \sqrt{\frac{\mathcal{N}(\lambda)}{n \delta}}.
\end{equation*}
\end{proof}

\begin{lemma}\cite[Lemma A.6]{balasubramanian2022unified}
\label{supppbound}
    For any $0 < \alpha \leq \beta$,
    $$\sup_{i\in \mathbb{N}}\left[\frac{i^{-\alpha}}{i^{-\beta}+\lambda}\right] \leq \lambda^{\frac{\alpha-\beta}{\beta}}, ~~ \forall ~\lambda >0.$$
\end{lemma}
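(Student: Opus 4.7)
The plan is to bound the quantity pointwise in $i$ by splitting on which of the two summands $i^{-\beta}$ and $\lambda$ dominates in the denominator. Since the denominator $i^{-\beta}+\lambda$ is, up to a factor of $2$, the maximum of the two, and we are willing to lose constants that end up being swallowed anyway, this case-split avoids any calculus optimization and produces the bound $\lambda^{(\alpha-\beta)/\beta}$ directly for every $i$. The supremum then inherits the same bound.

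In the first case, when $i^{-\beta} \le \lambda$, I would drop $i^{-\beta}$ from the denominator to obtain $i^{-\alpha}/(i^{-\beta}+\lambda) \le i^{-\alpha}/\lambda$. Writing $i^{-\alpha} = (i^{-\beta})^{\alpha/\beta}$ and using that $\alpha/\beta \in (0,1]$ together with the assumption $i^{-\beta} \le \lambda$, monotonicity of $t \mapsto t^{\alpha/\beta}$ gives $i^{-\alpha} \le \lambda^{\alpha/\beta}$. Dividing by $\lambda$ yields exactly $\lambda^{(\alpha-\beta)/\beta}$.

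In the complementary case $i^{-\beta} > \lambda$, I would instead drop $\lambda$ from the denominator, leaving $i^{-\alpha}/i^{-\beta} = i^{\beta-\alpha}$. The hypothesis $i^{-\beta} > \lambda$ rearranges to $i < \lambda^{-1/\beta}$, and since $\beta - \alpha \ge 0$ by assumption, raising to the power $\beta-\alpha$ gives $i^{\beta-\alpha} < \lambda^{-(\beta-\alpha)/\beta} = \lambda^{(\alpha-\beta)/\beta}$. Combining the two cases proves the pointwise bound for every $i \in \mathbb{N}$, and taking the supremum concludes the argument.

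There is no substantial obstacle here; the only points that require care are (i) that $\alpha/\beta \in (0,1]$ is what makes the monotonicity step in Case~1 work in the correct direction, (ii) that the sign of the exponent $\beta-\alpha$ in Case~2 matches the direction of the inequality $i < \lambda^{-1/\beta}$, and (iii) that for very small $\lambda$ Case~1 may be vacuous but Case~2 then applies to every $i$, so the bound still holds uniformly in $\lambda > 0$. An alternative, slightly sharper route would be to substitute $x = i^{-\beta}$, set $r = \alpha/\beta \in (0,1]$, and maximize $f(x) = x^r/(x+\lambda)$ in closed form at $x^\ast = r\lambda/(1-r)$, giving the tighter constant $r^r(1-r)^{1-r} \le 1$; but since we only need the constant $1$, the two-case argument is the cleanest path.
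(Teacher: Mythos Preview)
Your proof is correct; the case split on whether $i^{-\beta}$ or $\lambda$ dominates is clean and the monotonicity considerations in each case are handled properly. The paper itself does not prove this lemma but simply cites it from \cite[Lemma~A.6]{balasubramanian2022unified}, so there is no in-paper argument to compare against; your self-contained two-case argument is a perfectly good justification.
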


\begin{lemma}\cite[Lemma A.11]{gupta2024optimal}
\label{seriessum}
    For $\alpha >1$, $\beta >1,$ and $q \geq \frac{\alpha}{\beta}$, we have 
    $$\sum_{i\in \mathbb{N}}\frac{i^{-\alpha}}{(i^{-\beta}+\lambda)^q} \lesssim \lambda^{-\frac{1+\beta q -\alpha}{\beta}}, ~~ \forall ~\lambda >0.$$
\end{lemma}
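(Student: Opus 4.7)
The plan is to split the series at the natural threshold $N := \lfloor \lambda^{-1/\beta} \rfloor$, the index at which the two summands $i^{-\beta}$ and $\lambda$ in the denominator balance. For $i \leq N$ the term $i^{-\beta}$ dominates, whereas for $i > N$ the term $\lambda$ dominates. I will bound each piece separately, show that both contributions yield the target rate $\lambda^{-(1+\beta q - \alpha)/\beta}$, and add them.

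For the head sum I would use the pointwise estimate $(i^{-\beta}+\lambda)^q \geq i^{-\beta q}$, which reduces the problem to
\begin{equation*}
\sum_{i=1}^{N}\frac{i^{-\alpha}}{(i^{-\beta}+\lambda)^q} \leq \sum_{i=1}^{N} i^{\beta q - \alpha}.
\end{equation*}
The hypothesis $q \geq \alpha/\beta$ makes the exponent $\beta q - \alpha$ nonnegative, so the summand is nondecreasing and an integral comparison gives $\sum_{i=1}^{N} i^{\beta q - \alpha} \lesssim N^{1+\beta q - \alpha}$. Substituting $N \asymp \lambda^{-1/\beta}$ then produces exactly $\lambda^{-(1+\beta q - \alpha)/\beta}$.

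For the tail sum I would instead use $(i^{-\beta}+\lambda)^q \geq \lambda^q$ to pull the regularization parameter out, obtaining
\begin{equation*}
\sum_{i>N}\frac{i^{-\alpha}}{(i^{-\beta}+\lambda)^q} \leq \lambda^{-q}\sum_{i>N} i^{-\alpha}.
\end{equation*}
Since $\alpha > 1$, a second integral comparison yields $\sum_{i>N} i^{-\alpha} \lesssim N^{1-\alpha}$, so the tail is bounded by $\lambda^{-q} N^{1-\alpha} = \lambda^{-q+(\alpha-1)/\beta} = \lambda^{-(1+\beta q - \alpha)/\beta}$, matching the head estimate.

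I do not foresee any real obstacle; the argument is a routine dyadic-style decomposition whose two halves are tuned to the same exponent by design. The only point requiring attention is the role of the threshold condition $q \geq \alpha/\beta$: it is precisely what is needed so that the head sum is governed by its largest term rather than by a $\lambda$-independent constant, which would otherwise produce a strictly slower rate. The hypothesis $\alpha > 1$ plays the analogous role for the tail by ensuring summability of $i^{-\alpha}$ and the resulting $N^{1-\alpha}$ bound.
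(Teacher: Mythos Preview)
Your argument is correct and is the standard approach for such estimates. Note, however, that the paper does not give its own proof of this lemma: it is simply quoted with a citation to \cite[Lemma A.11]{gupta2024optimal}, so there is no in-paper proof to compare against. Your decomposition at the balance point $N \asymp \lambda^{-1/\beta}$, followed by integral comparison on each piece, is precisely how this type of bound is customarily established and is almost certainly what the cited reference does as well.
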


\section{Lower Bounds}\label{ch_5:sec:lower_bounds}
In this section, we establish lower bounds for the error term discussed earlier and demonstrate that they align with the upper bounds, thereby validating the optimality of the proposed estimator for $\beta^*$.\\

\noindent
To establish lower bounds, we analyze the divergence between two probability measures, $P_1$ and $P_2$, defined over a measurable space $(\mathcal{X}, \mathcal{A})$. Specifically, we use the \textbf{Kullback-Leibler (KL) divergence}, defined as:

$$\mathcal{K}(P_1, P_2) :=
    \begin{cases}
         \displaystyle \int\limits_\mathcal{X} \log\left(\frac{dP_1}{dP_2}\right) dP_1 & \text{if } P_1 \ll P_2,\\
        +\infty & \text{otherwise},
    \end{cases}$$
where $P_1 \ll P_2$ indicates that $P_1$ is absolutely continuous with respect to $P_2$.

Our approach to deriving lower bounds follows the framework presented in \cite[Chapter 2]{tsyback2009lb}. The idea is to identify $N+1$ elements $\{\theta_0', \ldots, \theta_N'\}$ from the hypothesis space such that the distance between any pair of them is at least $2r$, for some fixed constant $r > 0$. Each $\theta_j'$ corresponds to a probability distribution $P_{\theta_j'}$, and the average KL divergence between $P_{\theta_j'}$ and $P_{\theta_0'}$ should grow at most logarithmically with $N$.

As a result, when $N$ is large, it follows with high probability that any estimator $\hat{\theta}$ will be at least a distance $r$ away from at least one of the $\theta_j'$. For convenience, we restate the key result from \cite[Theorem 2.5]{tsyback2009lb} below.

\begin{theorem}\cite[Theorem 2.5]{tsyback2009lb}
\label{tsyback theorem}
    Assume that $N >2$ and suppose that the hypothesis space $\Theta'$ contains elements $\theta_{0}', \theta_{1}', \ldots, \theta_{N}'$ such that:
    \begin{enumerate}
        \item $d(\theta_{j}', \theta_{k}' ) \geq 2r >0 \quad \forall~ 0\leq j< k \leq N$;
        \item $P_{\theta_{j}'} \ll P_{\theta_{0}'} \quad \forall j =1,2,\ldots,N,$ and
        $$\frac{1}{N} \sum_{j=1}^{N}\mathcal{K}(P_{\theta_{j}'},P_{\theta_{0}'} ) \leq u \log N$$
        for some $0 < u <\frac{1}{8}$. Then
    \end{enumerate}
    $$\inf_{\hat{\theta}}\sup_{\theta\in \Theta'} P_{\theta}(d(\theta,\hat{\theta})\geq r ) \geq \frac{\sqrt{N}}{1+\sqrt{N}}\left(1-2u-\sqrt{\frac{2u}{\log N}}\right),$$
    where the infimum is taken over all estimators  $\hat{\theta}$ of $\theta\in\Theta'$.
\end{theorem}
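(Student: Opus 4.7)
Since this result is Theorem~2.5 of Tsybakov's monograph, the plan is to follow the standard \emph{reduction to multiple hypothesis testing} that underlies most minimax lower bounds in nonparametric statistics. The first step is to convert the estimation lower bound into a testing lower bound. For any estimator $\hat{\theta}$, I would define a test $\psi^\ast(X) = \arg\min_{0 \leq j \leq N} d(\hat{\theta}, \theta_j')$ with arbitrary tie breaking. The $2r$-separation of $\{\theta_0', \ldots, \theta_N'\}$ together with the triangle inequality gives that whenever $\psi^\ast(X) = k \neq j$ under $P_{\theta_j'}$,
\[
2r \;\leq\; d(\theta_j', \theta_k') \;\leq\; d(\hat{\theta}, \theta_j') + d(\hat{\theta}, \theta_k') \;\leq\; 2\, d(\hat{\theta}, \theta_j'),
\]
using that $\psi^\ast$ chose $k$ and not $j$. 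Hence $\{\psi^\ast \neq j\} \subseteq \{d(\hat{\theta}, \theta_j') \geq r\}$, and taking the sup over $\theta \in \Theta'$ reduces the problem to bounding $\inf_{\psi} \sup_{0 \leq j \leq N} P_{\theta_j'}(\psi \neq j)$ from below, where the infimum ranges over measurable tests valued in $\{0, 1, \ldots, N\}$.

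The second step bounds this minimax testing error using the KL assumption, via a sharpened Fano/Birgé-type inequality. I would introduce the likelihood ratios $Z_j = dP_{\theta_j'}/dP_{\theta_0'}$ and use Markov's inequality on $\log Z_j$ together with the averaged bound $\frac{1}{N}\sum_{j=1}^{N} \mathcal{K}(P_{\theta_j'}, P_{\theta_0'}) \leq u \log N$ to control, uniformly in $j$, the $P_{\theta_0'}$-mass of the sets on which $Z_j$ is too large. Changing measure from each $P_{\theta_j'}$ back to $P_{\theta_0'}$ on the events $\{\psi = j\}$, summing over $j$ and exploiting the disjointness $\sum_{j=0}^{N}\mathbf{1}\{\psi = j\} = 1$, one obtains an upper bound on the average probability of correct decision of the order $2u + \sqrt{2u/\log N} + 1/(1+\sqrt{N})$. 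Rearranging this inequality yields the lower bound
\[
\inf_{\psi}\sup_{0 \leq j \leq N} P_{\theta_j'}(\psi \neq j) \;\geq\; \frac{\sqrt{N}}{1 + \sqrt{N}}\!\left(1 - 2u - \sqrt{\tfrac{2u}{\log N}}\right),
\]
which combined with the first step completes the proof.

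The main obstacle is obtaining the sharp prefactor $\sqrt{N}/(1+\sqrt{N})$: a direct application of Fano's inequality gives only the weaker $1/(N+1)$. Recovering the sharper constant requires a careful truncation of $\log Z_j$ at a threshold balancing the Markov-inequality slack against the tail of the log-likelihood ratio under $P_{\theta_j'}$, together with an optimization over this threshold. This delicate calibration (done in Chapter~2 of \cite{tsyback2009lb}) is what produces the correction term $\sqrt{2u/\log N}$ and is why the statement requires $u < 1/8$ rather than merely $u < 1$.
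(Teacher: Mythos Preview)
The paper does not prove this theorem; it is quoted verbatim from \cite[Theorem~2.5]{tsyback2009lb} and used as a black box in the proof of Theorem~\ref{ch_5:lower_bound_main_theorem}. Your sketch correctly outlines the two-step argument from Tsybakov's monograph --- reduction from estimation to multiple hypothesis testing via a minimum-distance test, followed by a refined Fano-type inequality based on truncating the log-likelihood ratios --- so there is nothing to compare against in the present paper.
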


As evident from Theorem \ref{tsyback theorem}, we require $N+1$ functions with a fixed minimum distance between any pair. The next lemma, famously known as Varshamov-Gilbert Bound, gives the foundation to construct such functions for our analysis.
\begin{lemma}[Varshamov-Gilbert bound \cite{tsyback2009lb}] 
\label{VGbound}
Let $M \geq 8$. Then there exists a subset $\Theta = \{\theta^{(0)},\ldots,\theta^{(N)}\} \subset \{0,1\}^{M}$ such that $\theta^{(0)}=(0,\cdots,0)$,
\begin{equation*}
    H(\theta,\theta^{'}) > \frac{M}{8}, \quad \forall ~~ \theta \neq \theta^{'} \in \Theta,
\end{equation*}
where $\displaystyle H(\theta, \theta^{'}) = \sum_{i=1}^{M}(\theta_{i}-\theta^{'}_{i})^2 $ is the Hamming distance and $N \geq 2^{\frac{M}{8}}$.
\end{lemma}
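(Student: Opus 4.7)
The plan is to follow the classical probabilistic/combinatorial argument for the Varshamov--Gilbert bound, which appears as a standard result in Tsybakov's monograph. Since the statement is a purely combinatorial fact about binary strings under the Hamming metric and makes no reference to the operators $\mathbb{A}$, $\Gamma$, or the source condition, the proof is completely detached from the functional-regression machinery built earlier in the paper. I would use a greedy packing construction together with a volume bound on Hamming balls.

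First I would fix $\theta^{(0)} = (0,\ldots,0)$ and build the remaining $\theta^{(j)}$ iteratively. After having selected $\theta^{(0)},\ldots,\theta^{(j)}$, call a point $\theta \in \{0,1\}^M$ \emph{forbidden} if $H(\theta, \theta^{(i)}) \leq M/8$ for some $i \leq j$. The forbidden set has cardinality at most $(j+1)\,V(M/8)$, where
\begin{equation*}
V(r) := \sum_{k=0}^{\lfloor r \rfloor} \binom{M}{k}
\end{equation*}
is the volume of a Hamming ball of radius $r$ in $\{0,1\}^{M}$. As long as the forbidden set does not exhaust $\{0,1\}^M$, one can pick a new $\theta^{(j+1)}$ satisfying the separation requirement. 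The construction therefore produces $N+1$ points with pairwise Hamming distance strictly greater than $M/8$, provided $(N+1)\,V(M/8) \geq 2^M$, i.e., $N+1 \geq 2^M / V(M/8)$.

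The key quantitative step is to control $V(M/8)$. Using the standard entropy inequality $\binom{M}{k} \leq 2^{M H_b(k/M)}$ (valid for $k/M \leq 1/2$), together with monotonicity of $H_b$ on $[0,1/2]$, yields $V(M/8) \leq (\lfloor M/8 \rfloor + 1)\,2^{M H_b(1/8)}$, where $H_b(p) = -p\log_2 p - (1-p)\log_2(1-p)$. A direct numerical check gives $H_b(1/8) = \tfrac{3}{8} + \tfrac{7}{8}\log_2(8/7) < 7/8$, so $1 - H_b(1/8) > 1/8$. Absorbing the polynomial factor in $M$ for $M \geq 8$ then gives $2^M / V(M/8) \geq 2^{M/8}$, which yields $N \geq 2^{M/8}$ as required.

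The main (and essentially only) obstacle is the numerical verification $1 - H_b(1/8) \geq 1/8$; everything else is bookkeeping for the greedy step and a textbook entropy bound on binomial sums. Because the result is classical and unrelated to the structure of the operator $\mathbb{A}^*\mathbb{A}$, I expect the paper will simply invoke \cite{tsyback2009lb} rather than reproduce the argument in full, and use the lemma downstream to supply the $N+1$ hypotheses required by Theorem \ref{tsyback theorem} in the construction of the lower-bound family for $\beta^*$.
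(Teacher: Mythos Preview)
Your anticipation is exactly right: the paper states this lemma with a citation to Tsybakov and gives no proof of its own, using it only to supply the well-separated hypotheses for Theorem~\ref{tsyback theorem}. Your greedy packing plus Hamming-ball volume/entropy argument is the standard proof and is correct; the only minor slip is the direction of the inequality in the clause ``provided $(N+1)\,V(M/8) \geq 2^M$'' (the greedy step succeeds while $(j+1)V(M/8) < 2^M$, which yields $N+1 \geq 2^M/V(M/8)$), but your conclusion is unaffected.
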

Another difficulty in deriving these lower bounds lies in managing the Kullback-Leibler (KL) divergence between two distributions. To address this, we introduce an alternative norm definition for the KL divergence in the context of the polynomial regression model, which will play a crucial role in our analysis.

\subsection{KL divergence for polynomial regression with Gaussian white noise:}
Let us assume that $\epsilon_{i}(t)$ is a Gaussian white noise process with zero mean and constant variance $\sigma^{2}$, independent across $i$ and independent of $(X_{i})_{i=1}^{n}$. Let $P_{\theta}$ and $P_{\theta'}$ be the joint probability measures of $\{(X_{i},Y_{i}) : 1 \le i \le n\}$ corresponding to $\beta^{*}=f_{\theta}$ and $\beta^{*}=f_{\theta'}$, respectively. Then
\begin{equation*}
    \log\!\left(\frac{dP_{\theta}}{dP_{\theta'}}\right) = \log\!\left(\frac{dP_{\theta}(Y|X)}{dP_{\theta'}(Y|X)}\right).
\end{equation*}

\noindent
Since $\epsilon$ is a centred Gaussian white–noise process with covariance 
operator $\sigma^{2}I$ and is independent of $X$, the model
$$Y = \mathbb{A}\beta^{*} + \epsilon$$
immediately implies that $Y|X$ is a Gaussian process with mean
$\mathbb{A}\beta^{*}$ and covariance operator $\sigma^{2}I$. Indeed,
$$\mathbb{E}[Y|X] 
  = \mathbb{A}\beta^* + \mathbb{E}[\epsilon] 
  = \mathbb{A}\beta^*,$$
and covariance operator
$$\operatorname{Cov}(Y|X)
  = \operatorname{Cov}(\epsilon|X)
  = \sigma^{2}I,$$
as $\epsilon$ is independent of $X$.
Thus, conditional on $X$, $Y$ is a Gaussian process with mean 
$\mathbb{A}\beta$ and covariance operator $\sigma^{2}I$.
 Hence $P_{\theta}(Y|X)$ and $P_{\theta'}(Y|X)$ are Gaussian measures with means $\mathbb{A}f_{\theta}$ and $\mathbb{A}f_{\theta'}$, respectively, and common covariance operator $\sigma^{2}I$~\cite{rajput1972gaussian}. Using that $P_{\theta}(Y|X)$, $P_{\theta'}(Y|X)$ are Gaussian measures and Theorem~2.2~\cite{minh2021regularized}, we directly obtain
$$
\mathcal{K}(P_{\theta},P_{\theta'})
= \int \log\!\left(\frac{dP_{\theta}}{dP_{\theta'}}\right)\, dP_{\theta}
= \frac{n}{2\sigma^{2}}
  \left\|(\mathbb{A}^{*}\mathbb{A})^{1/2}(f_{\theta}-f_{\theta'})\right\|_{\mathbb{L}^{2}}^{2}.
$$

With all the necessary tools at our hand, we are now prepared to present our main result, which provides the lower bounds and demonstrates the optimality of the proposed estimator.
\begin{theorem}\label{ch_5:lower_bound_main_theorem}
Suppose Assumptions \ref{as:1} and \ref{as:3} hold. Then for $0 \leq s \leq \frac{1}{2}$, we have
\begin{equation*}
        \lim_{a \to 0} \lim_{n \to \infty} \inf_{\hat{\beta}} \sup_{\beta^* \in \mathbb{L}^2} \mathbb{P}\left\{\|(\mathbb{A}^*\mathbb{A})^s(\hat{\beta}-\beta^*)\|_{\mathbb{L}^2} \geq a \varphi(\psi^{-1}(n^{-\frac{1}{2}}))\psi^{-s}(n^{-\frac{1}{2}})\right\} =1,
    \end{equation*}
where the infimum is taken over all possible estimators.
\end{theorem}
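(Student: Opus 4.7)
The plan is to apply Theorem~\ref{tsyback theorem} to a finite packing of $\Omega_{\varphi,R}$ constructed from the eigenbasis of $\Gamma$. Let $\{e_k\}$ be an orthonormal basis of $L^2(S_2)$, fix one index $k_0$, pick an integer $M$ to be determined, and for each $\theta=(\theta_{M+1},\dots,\theta_{2M})\in\{0,1\}^M$ set
$$
f_\theta \;=\; \frac{R}{\sqrt{M}}\sum_{m=M+1}^{2M}\theta_m\,\varphi(\mu_m)\,(e_{k_0}\otimes\phi_m).
$$
Because $\mathbb{A}^*\mathbb{A}(e_{k_0}\otimes\phi_m)=\mu_m(e_{k_0}\otimes\phi_m)$, one has $f_\theta=\varphi(\mathbb{A}^*\mathbb{A})v_\theta$ with $v_\theta=\tfrac{R}{\sqrt{M}}\sum_m\theta_m(e_{k_0}\otimes\phi_m)$ and $\|v_\theta\|_{\mathbb{L}^2}\le R$, so $f_\theta\in\Omega_{\varphi,R}$ and Assumption~\ref{as:1} is respected. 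Lemma~\ref{VGbound} then furnishes $\Theta=\{\theta^{(0)},\dots,\theta^{(N)}\}\subset\{0,1\}^M$ with $\theta^{(0)}=0$, $N\ge 2^{M/8}$, and $H(\theta,\theta')>M/8$ for distinct elements.

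Writing $\lambda:=M^{-b}$, Assumption~\ref{as:3} yields $\mu_m\asymp\lambda$ uniformly for $m\in[M+1,2M]$, so monotonicity of $\varphi$ confines $\varphi(\mu_m)$ between $\varphi(c_0\lambda)$ and $\varphi(\lambda)$ for an absolute constant $c_0>0$. This gives the separation
$$
\|(\mathbb{A}^*\mathbb{A})^s(f_\theta-f_{\theta'})\|_{\mathbb{L}^2}^2 \;=\; \frac{R^2}{M}\sum_{m=M+1}^{2M}(\theta_m-\theta'_m)^2\mu_m^{2s}\varphi(\mu_m)^2 \;\gtrsim\; \varphi(c_0\lambda)^2\lambda^{2s},
$$
while the Gaussian white-noise KL formula derived in the preceding subsection yields
$$
\mathcal{K}(P_{\theta^{(j)}},P_{\theta^{(0)}}) \;=\; \frac{n}{2\sigma^{2}}\|(\mathbb{A}^{*}\mathbb{A})^{1/2}(f_{\theta^{(j)}}-f_{\theta^{(0)}})\|_{\mathbb{L}^{2}}^{2} \;\lesssim\; n\,\lambda\,\varphi(\lambda)^2.
$$
Averaging over $j$ and using $\log N\gtrsim M=\lambda^{-1/b}$, the hypothesis $\tfrac1N\sum_j\mathcal{K}\le u\log N$ reduces to $n\lambda^{1+1/b}\varphi(\lambda)^2\lesssim 1$, i.e.\ $n\psi(\lambda)^2\lesssim 1$. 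Choosing $\lambda=c\,\psi^{-1}(n^{-1/2})$ for a sufficiently small $c>0$ makes this hold with some $u<1/8$.

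Plugging back, one obtains the pairwise separation $r\asymp\varphi(c_0 c\,\psi^{-1}(n^{-1/2}))\,\psi^{-s}(n^{-1/2})$, whereupon Theorem~\ref{tsyback theorem} produces a lower bound of $\tfrac{\sqrt N}{1+\sqrt N}\bigl(1-2u-\sqrt{2u/\log N}\bigr)\to 1$ as $n\to\infty$ (since $M=\lambda^{-1/b}\to\infty$) on the worst-case probability that $\|(\mathbb{A}^*\mathbb{A})^s(\hat\beta-\beta^*)\|_{\mathbb{L}^2}\ge r$; the external limit $a\to 0$ in the theorem statement absorbs the constants $c_0,c$ together with the hidden multiplicative factor.

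The main obstacle is the complete lack of regularity on the index function $\varphi$ beyond continuity and monotonicity: with no Lipschitz or operator-monotone handle available, the only tool for comparing $\varphi(\mu_m)$ across the dyadic block $[M+1,2M]$ is the polynomial sandwich $\mu_m\asymp m^{-b}$ of Assumption~\ref{as:3}, and one must separately verify that $\liminf_{n}\varphi(c_0c\,\psi^{-1}(n^{-1/2}))/\varphi(\psi^{-1}(n^{-1/2}))>0$ so that the constants $c_0,c$ can indeed be absorbed by the outer $a\to 0$. Tracking these absolute constants carefully so that $u$ genuinely falls below $1/8$ is the technical heart of the argument; the remaining steps are algebraic bookkeeping that parallels the scalar-response lower-bound analysis of \cite{polynomial2023regularization}.
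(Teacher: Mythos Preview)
Your overall scaffold---Tsybakov's reduction, a Varshamov--Gilbert packing of a dyadic eigenblock, and the Gaussian white-noise KL identity---matches the paper. The genuine gap is precisely the one you flag but do not close: with the normalization $f_\theta=\tfrac{R}{\sqrt M}\sum_m\theta_m\varphi(\mu_m)(e_{k_0}\otimes\phi_m)$, the separation lower bound carries $\varphi(\mu_{2M})$ while the KL upper bound carries $\varphi(\mu_{M+1})$, and after the rescaling $\lambda=c\,\psi^{-1}(n^{-1/2})$ a further ratio $\varphi(c\lambda)/\varphi(\lambda)$ enters. Nothing in Assumption~\ref{as:1} bounds such ratios away from zero: for the legitimate index function $\varphi(x)=e^{-1/x}$ one has $\varphi(c\lambda)/\varphi(\lambda)=\exp\bigl((1-1/c)/\lambda\bigr)\to 0$ as $\lambda\to 0$ whenever $c<1$. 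Hence the outer limit $a\to 0$ cannot absorb these constants, and the claimed lower bound is not established for general $\varphi$.

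The paper avoids this by a different normalization of the hypotheses. It sets
\[
g_\theta=\sum_{k=1}^{M}\frac{\epsilon\,\theta_k}{\sqrt{M}\,\varphi(\mu_{k+M})\,\mu_{k+M}^{s}}\,(\mathbf 1_c\otimes\phi_{k+M}),\qquad f_\theta=\varphi(\mathbb A^*\mathbb A)\,g_\theta,
\]
so that the coefficient of $\mathbf 1_c\otimes\phi_{k+M}$ in $(\mathbb A^*\mathbb A)^s f_\theta$ is exactly $\epsilon\theta_k M^{-1/2}$. The factors $\varphi(\mu_{k+M})$ therefore \emph{cancel} in both the separation (which becomes $\tfrac{\epsilon^2}{M}H(\theta,\theta')\ge\epsilon^2/8$) and the KL divergence (which involves only $\mu_{k+M}^{1-2s}$). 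The price is the source-condition constraint $\|g_\theta\|_{\mathbb L^2}\le R$, and the paper handles it by \emph{defining} $M$ through $h^{-1}$ with $h(x)=\varphi(x)x^s$: taking $M=\bigl\lfloor\tfrac12(b_0/h^{-1}(\epsilon/(c_0R)))^{1/b}\bigr\rfloor$ and $\epsilon=c_0R\,h(\psi^{-1}(n^{-1/2}))$ forces $\|g_\theta\|\le c_0R$ by monotonicity alone. Since $\varphi$ now enters only through a single evaluation of $h^{-1}$, no comparison of $\varphi$ at different scales is ever required. This inversion---pushing $1/(\varphi(\mu_m)\mu_m^s)$ into the test vectors $g_\theta$ rather than leaving $\varphi(\mu_m)$ in the hypotheses---is the missing idea in your proposal.
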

\begin{proof}
Define $h(x) = \varphi(x) x^s$ and for given $\epsilon>0$, let $M = \left\lfloor \frac{1}{2} \left(\frac{b_{0}}{h^{-1}\left(\frac{\epsilon}{c_{0}R}\right)}\right)^{\frac{1}{b}}\right\rfloor$ for some constant $0 < c_{0}\leq 1$, which will be specified later. For some $\theta \in \{0,1\}^{M}$, we define $$g_{\theta} = \sum_{k=1}^{M} \frac{\epsilon \theta_{k} M^{-\frac{1}{2}} (\mathbf{1}_{c} \otimes \phi_{k+M})}{\varphi(\mu_{k+M})\mu_{k+M}^s},$$ 
where $\mathbf{1}_{c}$ is an constant function on $S_{2}$ taking value $c=\frac{1}{\sqrt{\mu(S_{2})}}$ with Lebesgue measure $\mu$. Then it is easy to see that
    \begin{equation*}
        \begin{split}
            \|g_{\theta}\|_{\mathbb{L}^2}^2 = & \sum_{k=1}^{M} \frac{\epsilon^2 \theta_{k}^2}{M \varphi^2(\mu_{k+M})\mu_{k+M}^{2s}} \leq \sum_{k=1}^{M} \frac{\epsilon^2}{M \varphi^2(\mu_{k+M})\mu_{k+M}^{2s}}\\
            \leq & \frac{\epsilon^2}{\varphi^2(\mu_{2M})\mu_{2M}^{2s}} \leq \frac{\epsilon^2}{\varphi^2(b_{0}(2M)^{-b})b_{0}^{2s}(2M)^{-2bs}} = \frac{\epsilon^2}{h^2(b_{0}(2M)^{-b})}.
        \end{split}
    \end{equation*}
    Observe that $\|g_{\theta}\|_{\mathbb{L}^2} \leq R$. Let us define 
    $f_{\theta} = \varphi(\mathbb{A}^*\mathbb{A})g_{\theta}$ and it is evident that $f_{\theta} \in \Omega_{\varphi, R}$.\\
By Lemma \ref{VGbound} for $N\geq 2^{\frac{M}{8}}$, we have $\theta^{0},\ldots,\theta^{N} \in \{0,1\}^M$ such that $H(\theta^{i}, \theta^{j}) \geq \frac{M}{8},~\forall~0 \leq i <j \leq N$. Replacing $\theta = \theta^{i}$ in the definition of $f_{\theta}$, we generate $f_{\theta^{i}},~ 0 \leq i \leq N$.
    Consider the error term:
    \begin{equation*}
        \begin{split}
            \|(\mathbb{A}^*\mathbb{A})^s(f_{\theta^{i}}-f_{\theta^{j}})\|^2_{\mathbb{L}^2} = & \sum_{k=1}^{M} \frac{\epsilon^2}{M}(\theta_{k}^{i}-\theta_{k}^{j})\\
            = & \frac{\epsilon^2}{M} H(\theta^{i}, \theta^{j}) \geq \frac{\epsilon^2}{8} ~ \forall ~ 0 \leq i ,j \leq N, ~ i \neq j.
        \end{split}
    \end{equation*}
Next we consider the KL divergence
    \begin{equation*}
        \begin{split}
            \mathcal{K}(P_{\theta^{i}}, P_{\theta^{j}}) = & \frac{n}{2\sigma^2} \|(\mathbb{A}^*\mathbb{A})^{\frac{1}{2}}(f_{\theta^{i}}-f_{\theta^{j}})\|_{\mathbb{L}^2}^2 \\
            = & \frac{n}{2 \sigma^2}\left\|\sum_{k=1}^{M} \frac{\epsilon (\theta^{i}_{k}-\theta^{j}_{k})}{M^{\frac{1}{2}}\varphi(\mu_{k+M})\mu_{k+M}^s} (\mathbb{A}^*\mathbb{A})^{\frac{1}{2}} \varphi(\mathbb{A}^*\mathbb{A}) (\mathbf{1}_{c}\otimes\phi_{k+M}) \right\|^2\\
            = & \frac{n}{2 \sigma^2} \sum_{k=1}^{M} \frac{\epsilon^2(\theta^{i}_{k}-\theta^{j}_{k})^2}{M} \mu_{k+M}^{1-2s} = \frac{n \epsilon^2}{2 \sigma^2 M}\sum_{k=1}^{M}(\theta^{i}_{k}-\theta^{j}_{k})^2 \mu_{k+M}^{1-2s} \\
            \leq & \frac{n \epsilon^2}{2 \sigma^2 M} \mu_{M}^{1-2s}\sum_{k=1}^{M}(\theta^{i}_{k}-\theta^{j}_{k})^2 \leq \frac{n \epsilon^2}{2 \sigma^2} \mu_{M}^{1-2s}\\
            \leq & \frac{n \epsilon^2}{2 \sigma^2} b_{1}^{1-2s} M^{-b(1-2s)} = \frac{n \epsilon^2}{2 \sigma^2} b_{1}^{1-2s} M^{-(b(1-2s)+1)} M.
        \end{split}
    \end{equation*}
    Take $\epsilon = c_{0}R h(\psi^{-1}(n^{-\frac{1}{2}}))$ and 
    observe that $2M \geq \frac{1}{2}\left(\frac{b_{0}}{h^{-1}(\frac{\epsilon}{c_{0}R})}\right)^{1/b}$ and from this we can conclude that
    $$M^{-(b(1-2s)+1)} \leq \frac{4^{b(1-2s)+1}}{b_{0}^{\frac{b(1-2s)+1}{b}}}(\psi^{-1}(n^{-\frac{1}{2}}))^{\frac{b(1-2s)+1}{b}}.$$
    So we get
    \begin{equation*}
            \mathcal{K}(P_{\theta^{i}}, P_{\theta^{j}}) \leq c_{1} n \epsilon^2 M (\psi^{-1}(n^{-\frac{1}{2}}))^{\frac{b(1-2s)+1}{b}},
    \end{equation*}
    where $c_{1} = \frac{4^{b(1-2s)+1}b_{1}^{1-2s}}{2 \sigma^2 b_{0}^{\frac{b(1-2s)+1}{b}}}$. Putting the value of $\epsilon$, we get
    \begin{equation*}
        \begin{split}
             \mathcal{K}(P_{\theta^{i}}, P_{\theta^{j}}) \leq & c_{1} n M c_{0}^2 R^2 (h(\psi^{-1}(n^{-\frac{1}{2}})))^2 (\psi^{-1}(n^{-\frac{1}{2}}))^{\frac{b(1-2s)+1}{b}}\\
             = & c_{1}  n M c_{0}^2 R^2 (\varphi(\psi^{-1}(n^{-\frac{1}{2}})))^2 (\psi^{-1}(n^{-\frac{1}{2}}))^{2s} (\psi^{-1}(n^{-\frac{1}{2}}))^{\frac{b(1-2s)+1}{b}}\\
            = & c_{1}  n M c_{0}^2 R^2 (\varphi(\psi^{-1}(n^{-\frac{1}{2}})))^2 (\psi^{-1}(n^{-\frac{1}{2}}))^{2s+\frac{b(1-2s)+1}{b}}\\
            = & c_{1}  n M c_{0}^2 R^2 (\varphi(\psi^{-1}(n^{-\frac{1}{2}})) (\psi^{-1}(n^{-\frac{1}{2}}))^{\frac{1}{2}+\frac{1}{2b}})^2\\
            = & c_{1}  n M c_{0}^2 R^2 (\psi(\psi^{-1}(n^{-\frac{1}{2}})))^2 = c_{1}   M c_{0}^2 R^2\\
            \leq & \frac{8}{\log 2} c_{1} u \tilde{c}_{0}^2 \log N.
        \end{split}
    \end{equation*}
    In last step we have used $c_{0} = \sqrt{u} \tilde{c}_{0} $ and $N \geq 2^{\frac{M}{8}}$. Now take $\tilde{c}_{0}$ small enough such that $\frac{8 c_{1} \tilde{c}_{0}^2}{\log 2} \leq 1$, then we have
    $$\mathcal{K}(P_{\theta^{i}}, P_{\theta^{j}}) \leq u \log N.$$
Then for $a = \frac{u \tilde{c}_{0}R}{4 \sqrt{2}}$ and by Theorem \ref{tsyback theorem}, we have
$$\inf_{\hat{\beta}} \sup_{\beta \in \mathbb{L}^2} \mathbb{P}\left\{\|(\mathbb{A}^*\mathbb{A})^s(\hat{\beta}-\beta)\|_{\mathbb{L}^2} \geq a \varphi(\psi^{-1}(n^{-\frac{1}{2}}))\psi^{-s}(n^{-\frac{1}{2}})\right\} \geq \frac{\sqrt{N}}{\sqrt{N}+1}\left(1-2u-\sqrt{\frac{2u}{\log N}}\right).$$
Using the fact that $n \to \infty$ implies $N \to \infty$ yields the desired result.
\end{proof}
\vspace{1mm}
\begin{remark}
    The lower bounds established in Theorem \ref{ch_5:lower_bound_main_theorem} align with the convergence rates obtained in corollary \ref{ch_5:final_bound}, thereby confirming the optimality of the proposed estimator. The results of Theorem \ref{ch_5:lower_bound_main_theorem}, when restricted to the scalar response case, establish a lower bound for the convergence rates derived in \cite{polynomial2023regularization}, which was not previously addressed in \cite{polynomial2023regularization}.
\end{remark}

\section*{Acknowledgments}
\bibliographystyle{acm}
\bibliography{123}
\end{document}